\providecommand{\U}[1]{\protect\rule{.1in}{.1in}}
\newtheorem{Thm}{Theorem}[section]
\newtheorem{Lem}[Thm]{Lemma}
\newtheorem{corollary}[Thm]{Corollary}
\theoremstyle{definition}
\newtheorem{example}[Thm]{Example}
\numberwithin{equation}{section}
\newcommand{\cA}{{\mathcal A}}
\newcommand{\field}[1]{\mathbb{#1}}
\newcommand{\R}{\field{R}}
\newcommand{\C}{\field{C}}
\newcommand{\hyp}{\field{H}}
\DeclareMathOperator\diver{div}
\DeclareMathOperator\Hess{Hess}
\DeclareMathOperator\spt{spt}
\DeclareMathOperator\sect{Sect}
\DeclareMathOperator\ric{Ric}
\DeclareMathOperator\Int{Int}
\begin{document}
\title[Asymptotic Dirichlet problem]{On the asymptotic Dirichlet problem for the minimal hypersurface equation in a
Hadamard manifold}
\author[Jean-Baptiste Casteras]{Jean-Baptiste Casteras}
\address{UFRGS, Instituto de Matem\'atica, Av. Bento Goncalves 9500, 91540-000 Porto
Alegre-RS, Brasil.}
\email{Jean-Baptiste.Casteras@univ-brest.fr}
\author[Ilkka Holopainen]{Ilkka Holopainen}
\address{Department of Mathematics and Statistics, P.O. Box 68, 00014 University of
Helsinki, Finland.}
\email{ilkka.holopainen@helsinki.fi}
\author[Jaime B. Ripoll]{Jaime B. Ripoll}
\address{UFRGS, Instituto de Matem\'atica, Av. Bento Goncalves 9500, 91540-000 Porto
Alegre-RS, Brasil.}
\email{jaime.ripoll@ufrgs.br}
\thanks{J.-B.C. supported by the CNPq (Brazil) project 501559/2012-4; I.H. supported by the Academy of Finland, project 252293; J.R. supported by the CNPq (Brazil) project 302955/2011-9}
\subjclass[2000]{58J32, 53C21, 31C45}
\keywords{minimal graph equation, Dirichlet problem, Hadamard manifold}

\begin{abstract}
We study the Dirichlet problem at infinity on a Cartan-Hadamard manifold
$\left(  M,d\right)  $ of dimension $n\geq2$ for a large class of operators
containing in particular the $p$-Laplacian and the minimal graph operator. We
extend the existence result of \cite{HoVa} obtained for the $p$-Laplacian to
our class of operators. As an application of our main result, we prove the
solvability of the asymptotic Dirichlet problem for the minimal graph equation
for any continuous boundary data provided that
\[
-d(o,x)^{2\left(  \phi-2\right)  -\varepsilon}\leq\sect_{x}(P)\leq-\dfrac
{\phi(\phi-1)}{d(o,x)^{2}},
\]
for some constants $\phi>1$ and $\varepsilon>0,$ where $o$ is a fixed point of
$M$, $\sect_{x}(P)$ is the sectional curvature of a plane $P\subset T_{x}M$
and $x$ is any point in the complement of a ball $B(o,R_{0})$. So far, the
solvability of the asymptotic Dirichlet problem in the minimal case was
established only under hypothesis which included the condition $\sect_{x}%
(P)\leq c<0$ (see \cite{GR}, \cite{RT}).
\end{abstract}
\maketitle










\section{Introduction}

\label{SecIntro}
In this paper we study the asymptotic Dirichlet problem for operators
\begin{equation}
\label{M_equ}{\mathcal{Q}}[u]:= \diver{\mathcal A}(\lvert\nabla u \rvert
^{2})\nabla u
\end{equation}
on Cartan-Hadamard manifolds with ${\mathcal{A}}$ subject to growth
conditions. Recall that a Cartan-Hadamard manifold is a complete, connected
and simply connected Riemannian $n$-manifold, $n\ge2$, of non-positive
sectional curvature. By the Cartan-Hadamard theorem, the exponential map
$\exp_{o}\colon T_{o}M\to M$ is a diffeomorphism for every point $o\in M$.
Consequently, $M$ is diffeomorphic to $\mathbb{R}^{n}$. A Cartan-Hadamard
manifold $M$ can be compactified by adding a \emph{sphere at infinity\/},
denoted by $M(\infty)$, so that the resulting space $\bar M=M\cup M(\infty)$
equipped with the so-called \emph{cone topology\/} is homeomorphic to
a closed Euclidean ball; see \cite{EO}. The \emph{asymptotic Dirichlet
problem\/} on $M$ for the operator ${\mathcal{Q}}$ is then the following:
Given a continuous function $h$ on $M(\infty)$ does there exist a (unique)
function $u\in C(\bar M)$ such that ${\mathcal{Q}}[u]=0$ on $M$ and $u\vert
M(\infty)=h$?

We assume that ${\mathcal{A}}\colon(0,\infty)\to[0,\infty)$ is a smooth
function such that
\begin{equation}
\label{Agrowth}{\mathcal{A}}(t)\le A_{0}t^{(p-2)/2}%
\end{equation}
for all $t>0$, with some constants $A_{0}>0$ and $p\ge1$, and that
${\mathcal{B}}:={\mathcal{A}}^{\prime}/{\mathcal{A}}$ satisfies
\begin{equation}
\label{Bgrowth}-\frac{1}{2t} < {\mathcal{B}}(t)\le\frac{B_{0}}{t}%
\end{equation}
for all $t>0$ with some constant $B_{0}>-1/2$. Furthermore, we assume that
$t{\mathcal{A}}(t^{2})\to0$ as $t\to0+$ and therefore we set ${\mathcal{A}%
}(\lvert X \rvert^{2})X=0$ whenever $X$ is a zero vector. As a consequence of
\eqref{Bgrowth}, the function $t\mapsto t{\mathcal{A}}(t^{2})$ is strictly
increasing. A function $u$ is a (weak) solution to the equation ${\mathcal{Q}%
}[u]=0$ in an open set $\Omega\subset M$ if it belongs to the local Sobolev
space $W^{1,p}_{\mathrm{loc}}(\Omega)$ and
\begin{equation}
\label{Qsol}\int_{\Omega}\bigl\langle {\mathcal{A}}\bigl(\lvert\nabla u
\rvert^{2}\bigr)\nabla u,\nabla\varphi\bigr\rangle dm =0
\end{equation}
for every $\varphi\in C^{\infty}_{0}(\Omega)$. Such function $u$ will be
called a \emph{${\mathcal{Q}}$-solution\/} in $\Omega$. Furthermore, we say
that a function $u\in W^{1,p}_{\mathrm{loc}}(\Omega)$ is a \emph{${\mathcal{Q}%
}$-subsolution\/} in $\Omega$ if ${\mathcal{Q}}[u]\ge0$ weakly in $\Omega$,
that is
\begin{equation}
\label{Qsub}\int_{\Omega}\bigl\langle{\mathcal{A}}\bigl(\lvert\nabla u
\rvert^{2}\bigr)\nabla u, \nabla\varphi\bigr\rangle\,dm\le0
\end{equation}
for every non-negative $\varphi\in C^{\infty}_{0}(U)$. Similarly, a function
$v\in W^{1,p}_{\mathrm{loc}}(U)$ is called a \emph{${\mathcal{Q}}%
$-supersolution\/} in $\Omega$ if $-v$ is a ${\mathcal{Q}}$-subsolution in
$\Omega$. Note that $u+c$ is a ${\mathcal{Q}}$-solution (respectively,
${\mathcal{Q}}$-subsolution, ${\mathcal{Q}}$-supersolution) for every constant
$c$ if $u$ is a ${\mathcal{Q}}$-solution (respectively, ${\mathcal{Q}}%
$-subsolution, ${\mathcal{Q}}$-supersolution). It follows from the growth
condition \eqref{Agrowth} that test functions $\varphi$ in \eqref{Qsol} and
\eqref{Qsub} can be taken from the class $W^{1,p}_{0}(\Omega)$ if
$\lvert\nabla u \rvert\in L^{p}(\Omega)$.

We call a relatively compact open set $\Omega\Subset M$ \emph{${\mathcal{Q}}%
$-regular\/} if for any continuous boundary data $h\in C(\partial\Omega)$ there
exists a unique $u\in C(\bar{\Omega})$ which is a ${\mathcal{Q}}$-solution in
$\Omega$ and $u\vert\partial\Omega=h$. In addition to the growth conditions on
${\mathcal{A}}$, we assume that

\begin{itemize}
\item[(A)] there is an exhaustion of $M$ by an increasing sequence of
${\mathcal{Q}}$-regular domains $\Omega_{k}$, and that

\item[(B)] locally uniformly bounded sequences of continuous ${\mathcal{Q}}%
$-solutions are compact in relatively compact subsets of $M$.
\end{itemize}

In this paper the primary example of the equations that satisfy the conditions
above is the \emph{minimal graph equation\/}
\begin{equation}
{\mathcal{M}}[u]:=\diver\dfrac{\nabla u}{\sqrt{1+\lvert\nabla u\rvert^{2}}%
}=0,\label{min_eqn}%
\end{equation}
in which case
\[
{\mathcal{A}}(t)=\frac{1}{\sqrt{1+t}}\text{ and }{\mathcal{B}}(t)=-\frac
{1}{2(1+t)},
\]
and therefore \eqref{Agrowth} and \eqref{Bgrowth} hold with constants
$A_{0}=1$ and $B_{0}=0$, respectively. We note that $u$
satisfies \eqref{min_eqn} if and only if 
$G:=\left\{  \left(  x,u(x)\right)\colon x\in\Omega\right\}  $ 
is a minimal hypersurface in $M\times\mathbb{R}$. For the minimal graph equation, condition (A) follows
from \cite[Theorem 2]{DLR} where $\Omega_{k}$ may be chosen as a
geodesic ball with radius $k$ centered at a fixed point of $M$, and condition (B) follows 
from \cite[Theorem 1.1]{Spruck} (see also \cite[Theorem 1]{DLR}).

The class of equations considered here includes also the $p$-Laplace equation
\[
\diver\bigl(\lvert\nabla u\rvert^{p-2}\nabla u\bigr)=0,\ 1<p<\infty,
\]
in which case
\[
{\mathcal{A}}(t)=t^{(p-2)/2}\text{ and }{\mathcal{B}}(t)=\frac{p-2}{2t},
\]
and so $A_{0}=1$ and $B_{0}=(p-2)/2$. In the special case $p=2$ one obtains
the usual Laplace-Beltrami equation $\Delta u=0$, with 
${\mathcal{A}}(t)\equiv1$ and ${\mathcal{B}}(t)\equiv0$. It is well-known that the
properties (A) and (B) above hold for the $p$-Laplace equation and that (weak)
solutions of the $p$-Laplace equation have H\"{o}lder-continuous
representatives, usually called $p$-harmonic functions; see \cite{HKM}.

The asymptotic Dirichlet problem for the Laplace-Beltrami operator was solved
affirmatively by Choi \cite{choi} under assumptions that sectional curvatures
satisfy $\sect\leq-a^{2}<0$ and any two points in $M(\infty)$ can be separated
by convex neighborhoods. Such appropriate convex sets were constructed by
Anderson \cite{andJDG} for manifolds of pinched sectional curvature
$-b^{2}\leq\sect\leq-a^{2}<0.$ Independently, Sullivan \cite{sullivan} solved
the Dirichlet problem at infinity under the same pinched curvature assumption
by using probabilistic arguments. In \cite{andschoen}, Anderson and Schoen
presented a simple and direct solution to the Dirichlet problem again in the
case of pinched negative curvature. By modifying Anderson's argument, Borb\'{e}ly
\cite{borbpams} was able to construct appropriate convex sets under a weaker curvature 
lower bound $\sect_x\ge -g\bigl(\rho(x)\bigr)$, where $g(t)\approx e^{\lambda t}$, 
with $\lambda<1/3$. Here and throughout the paper $\rho(x)$ stands for the distance 
between $x\in M$ and a fixed point $o\in M$.
Major contributions to the Dirichlet
problem were given by Ancona in a series of papers \cite{ancannals},
\cite{anchyp}, \cite{ancpot}, and \cite{ancrevista}. In particular, he was
able to replace the curvature lower bound with a bounded geometry assumption
that each ball up to a fixed radius is $L$-bi-Lipschitz equivalent to an open
set in $\mathbb{R}^{n}$ for some fixed $L\geq1$; see \cite{ancannals}. On the
other hand, in \cite{ancrevista} Ancona constructed a 3-dimensional
Cartan-Hadamard manifold with sectional curvatures bounded from above by $-1$
where the asymptotic Dirichlet problem is not solvable. Another example of a
(3-dimensional) Cartan-Hadamard manifold, with sectional curvatures $\leq-1$,
on which the asymptotic Dirichlet problem is not solvable was constructed by
Borb\'{e}ly \cite{Bor}. To the best of our knowledge, the most general
curvature bounds under which the asymptotic Dirichlet problem for the Laplace-Beltrami equation 
is solvable are given in the following theorems by Hsu (see also Theorems~\ref{HVkor1} and \ref{HVkor2}
below). 
\begin{Thm}
\cite[Theorem 1.1]{Hs}\label{HsuThm1} Let $M$ be a Cartan-Hadamard manifold.
Suppose that there exist a positive constant $a$ and a positive and
non-increasing function $h$ with $\int_{0}^{\infty}t h(t)\,dt<\infty$ such
that
\[
-h\bigl(\rho(x)\bigr)^{2}e^{2a\rho(x)}\le\ric_{x}\quad\text{and}\quad
\sect\le-a^{2}.
\]
Then the Dirichlet problem at infinity for $M$ is solvable.
\end{Thm}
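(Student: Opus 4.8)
The plan is to solve the problem probabilistically, via Brownian motion on $M$ (this is the route of Hsu's paper). Let $(X_t)$ denote Brownian motion on $M$, with generator $\tfrac12\Delta$. It suffices to establish two facts: (i) from every starting point $x$ the process is transient and $X_t$ converges almost surely, in the cone topology, to a random point $X_\infty\in M(\infty)$; and (ii) the exit distribution is continuous up to the boundary, i.e.\ for every $\xi\in M(\infty)$ and every cone neighbourhood $V$ of $\xi$ one has $\mathbb{P}_x[X_\infty\in V]\to1$ as $x\to\xi$. Indeed, granting (i)--(ii), for $h\in C\bigl(M(\infty)\bigr)$ the function $u(x):=\mathbb{E}_x\bigl[h(X_\infty)\bigr]$ is bounded and harmonic (strong Markov property together with the mean value property of Brownian motion on small balls) and extends continuously to $\bar M$ with $u|M(\infty)=h$, the continuity at $\xi$ being $|u(x)-h(\xi)|\le\mathbb{E}_x|h(X_\infty)-h(\xi)|\to0$ from (ii) and continuity of $h$; uniqueness follows from the maximum principle. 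So everything reduces to (i) and (ii).

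\emph{Radial escape.} From $\sect\le-a^2$ the Laplacian comparison theorem gives $\Delta\rho(x)\ge(n-1)a\coth\bigl(a\rho(x)\bigr)\ge(n-1)a$ once $\rho(x)\ge1$. Since a Cartan--Hadamard manifold has empty cut locus, It\^o's formula applies to $\rho$ globally and yields $\rho(X_t)=\rho(X_0)+\beta_t+\tfrac12\int_0^t\Delta\rho(X_s)\,ds$ for a one-dimensional Brownian motion $\beta$; hence $\rho(X_t)\ge\rho(X_0)+\beta_t+\tfrac{(n-1)a}{2}t$ for $t$ large, so $\rho(X_t)\to\infty$ almost surely and in fact $\liminf_{t\to\infty}\rho(X_t)/t>0$. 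Comparing with the model space of constant curvature $-a^2$ (again via $\sect\le-a^2$) gives in addition a quantitative transience estimate $\mathbb{P}_x\bigl[\rho(X_t)\le R_0\text{ for some }t\bigr]\le e^{-c(\rho(x)-R_0)}$ when $\rho(x)>R_0$.

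\emph{Angular convergence.} Write $X_t$ in geodesic polar coordinates $(r,\vartheta)\in(0,\infty)\times\Sph^{n-1}$, identifying $\Sph^{n-1}$ with $M(\infty)$ by radial projection. The angular part $\vartheta(X_t)$ is a semimartingale whose local-martingale part has quadratic variation controlled by the inverse of the metric induced on geodesic spheres, and whose bounded-variation part is controlled by the second fundamental forms of those spheres. The upper bound $\sect\le-a^2$ forces Jacobi fields along radial geodesics to grow at least like $a^{-1}\sinh(ar)$, so the induced metric on $S(o,r)$ dominates $(a^{-1}\sinh ar)^2$ times the round metric; hence the quadratic variation of $\vartheta(X_\cdot)$ on $[t,\infty)$ is $\lesssim\int_t^\infty e^{-2a\rho(X_s)}\,ds$, which by the previous step is almost surely finite and tends to $0$. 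For the drift, Jacobi-field comparison using the Ricci lower bound $\ric\ge-(n-1)h(\rho)^2e^{2a\rho}$ controls the relevant geometry of the geodesic spheres from above and reduces the total angular drift on $[t,\infty)$ to an integral which the hypothesis $\int_0^\infty sh(s)\,ds<\infty$ (which, for non-increasing $h$, forces $h(s)=o(1/s^2)$ and a fortiori $\int^\infty h<\infty$) renders almost surely finite and vanishing as $t\to\infty$. Therefore $\vartheta(X_t)$ is almost surely Cauchy and converges; this is (i).

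\emph{Continuity of the exit distribution, and the main difficulty.} Fix $\xi\in M(\infty)$ and a cone neighbourhood $V$. If $x$ is close to $\xi$ in the cone topology and $\rho(x)$ is large, the transience estimate shows that with probability close to $1$ the process never descends below radius $\rho(x)/2$, and then the same angular estimates as above, now tracked from the configuration of $x$, bound the probability that the total angular displacement of $(X_t)$ ever exceeds a fixed small $\delta>0$ by a quantity going to $0$ as $\rho(x)\to\infty$, uniformly. Choosing $\delta$ so small that the angular $\delta$-ball about $\xi$ at large radii is contained in $V$ yields $\mathbb{P}_x[X_\infty\in V]\to1$ as $x\to\xi$, which is (ii). (Alternatively one can phrase the whole argument analytically, manufacturing at each $\xi$ a bounded superharmonic barrier out of the comparison ODEs, where the integrability $\int_0^\infty sh(s)\,ds<\infty$ is again exactly what keeps the barrier finite.) The genuinely delicate point is the drift estimate in the angular convergence step: the only available curvature lower bound, $\ric\ge-(n-1)h(\rho)^2e^{2a\rho}$, is weak and grows exponentially, so one must extract a sharp enough Hessian/Jacobi comparison bounding the geometry of geodesic spheres by something like $h(\rho)e^{a\rho}$ and balance it against the exponential decay $\sim e^{-2a\rho}$ of the angular diffusion supplied by $\sect\le-a^2$, so that the net angular motion is summable — and the role of $\int_0^\infty sh(s)\,ds<\infty$ is precisely to guarantee this.
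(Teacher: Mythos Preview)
This theorem is not proved in the present paper at all; it is quoted verbatim from Hsu \cite{Hs} in the introduction as part of the survey of known results on the Laplacian, alongside Theorem~\ref{HsuThm2} and the theorems of Choi, Anderson, Sullivan, Ancona, and Borb\'ely. The paper's own contributions are Theorems~\ref{thm1}, \ref{ThmMain} and Corollary~\ref{HVkor2_RT}, which concern the quasilinear operators $\mathcal{Q}$ of \eqref{M_equ} and are proved by the purely analytic barrier construction of Section~\ref{sec_barrier} together with the comparison principle of Lemma~\ref{lem_comp}. There is thus no ``paper's own proof'' of the statement you were asked about.

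That said, your sketch does follow the architecture of Hsu's original probabilistic argument: radial escape from $\sect\le -a^2$ via Laplacian comparison and It\^o's formula, angular convergence by bounding the quadratic variation and drift of the angular semimartingale, and continuity of the exit distribution at each point of $M(\infty)$. One imprecision is worth flagging. You write that ``Jacobi-field comparison using the Ricci lower bound \ldots\ controls the relevant geometry of the geodesic spheres from above.'' A Ricci lower bound does not bound individual Jacobi fields or the full Hessian of $\rho$ from above; what it does control, via the Riccati comparison for $\Delta\rho$, is the \emph{mean curvature} of geodesic spheres, and it is this scalar quantity (together with the growth of the sphere volume element) that enters Hsu's drift estimate. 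With that adjustment your outline is faithful to \cite{Hs}; the hypothesis $\int_0^\infty t\,h(t)\,dt<\infty$ is indeed exactly what makes the accumulated angular drift summable against the exponential decay coming from $\sect\le -a^2$, and your remark that for non-increasing $h$ it forces $h(s)=o(s^{-2})$ is correct.
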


\begin{Thm}
\cite[Theorem 1.2]{Hs}\label{HsuThm2} Let $M$ be a Cartan-Hadamard manifold.
Suppose that there exist positive constants $r_{0},\ \alpha>2,$ and
$\beta<\alpha-2$ such that
\[
-\rho(x)^{2\beta}\le\ric_{x}\quad\text{and}\quad\sect_{x}\le-\frac
{\alpha(\alpha-1)}{\rho(x)^{2}}
\]
for all $x\in M$, with $\rho(x)\ge r_{0}$. Then the Dirichlet problem at
infinity for $M$ is solvable.
\end{Thm}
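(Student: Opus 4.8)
The plan is to prove solvability by the classical Perron-method scheme, reducing it to the construction of a barrier at each point of $M(\infty)$. Since the Laplace--Beltrami operator $\Delta$ is the instance $\mathcal{A}\equiv 1$ of $\mathcal{Q}$, and conditions (A), (B) hold for $\Delta$ (balls are regular, and locally bounded families of harmonic functions are precompact by elliptic estimates), it suffices to produce, for every $x_0\in M(\infty)$, a \emph{barrier}: a nonnegative $w\in C(\bar M)$, superharmonic on $M$, with $w(x_0)=0$ and $\inf_{\bar M\setminus U}w>0$ for every cone-topology neighbourhood $U$ of $x_0$; the solution for boundary data $h\in C(M(\infty))$ is then obtained as a Perron solution, its uniqueness following from the maximum principle on the compact space $\bar M$. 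I would fix $x_0$, write $\rho=d(o,\cdot)$, and let $\theta\colon\bar M\setminus\{o\}\to[0,\pi]$ be the angle at $o$ between the geodesic from $o$ to the point and the ray representing $x_0$; recall that the truncated cones $C(R,\delta)=\{\rho>R,\ \theta<\delta\}$, together with their traces on $M(\infty)$, form a neighbourhood basis of $x_0$.

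Two comparison ingredients drive the construction. First, the upper bound $\sect_x\le-\alpha(\alpha-1)/\rho(x)^2$ and the Hessian comparison theorem — comparing Jacobi fields along radial geodesics with solutions of $f''=\alpha(\alpha-1)t^{-2}f$, which grow like $t^{\alpha}$ — give $\Delta\rho(x)\ge(n-1)\alpha/\rho(x)\bigl(1+o(1)\bigr)$ as $\rho(x)\to\infty$; hence, for any $\eta>0$ with $\eta+1<(n-1)\alpha$, one has $\Delta\rho^{-\eta}\le-c_0\eta\,\rho^{-\eta-2}<0$ for $\rho$ large, i.e. $\rho^{-\eta}$ is a strict supersolution with quantitative gain of order $\rho^{-\eta-2}$. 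The same curvature upper bound forces the metric of the geodesic sphere $\{\rho=r\}$ to dominate $c^2r^{2\alpha}$ times the round metric, so $|\nabla\theta|\le C\rho^{-\alpha}$. Second — and this is where the Ricci lower bound $\ric_x\ge-\rho(x)^{2\beta}$ is used — one needs an upper bound for $\Delta\theta$. Writing $\Delta\theta=\Delta_{g_r}\theta$, where $g_r$ is the induced metric on $\{\rho=r\}$ and $\theta$ is the round distance to the pole, one estimates how far $g_r$ (its Christoffel symbols and volume density) is from a conformal multiple of the round metric; bounding this deviation by means of $\ric_x\ge-\rho(x)^{2\beta}$ yields, on $C(R,\delta)$ with $\delta\le1$,
\[
|\Delta\theta|(x)\le C\bigl(\rho(x)^{-2\alpha}\theta(x)^{-1}+\rho(x)^{2\beta+2-2\alpha}\bigr).
\]

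I would then assemble the barrier by a standard gluing. Choose $\gamma>2$ (so that $\theta^{\gamma}$ is $C^2$ across the ray of $x_0$) and $\eta\in\bigl(0,\,2(\alpha-\beta-2)\bigr)$ with $\eta+1<(n-1)\alpha$; such $\eta$ exists precisely because $\beta<\alpha-2$. On a truncated cone $C(R_0,\theta_0)$ with $\theta_0\le1$ fixed and $R_0$ large, put $v=c_1\theta^{\gamma}+c_2\rho^{-\eta}$. Since $\gamma>2$ and $\theta\le\theta_0\le1$, expanding $\Delta(\theta^{\gamma})=\gamma(\gamma-1)\theta^{\gamma-2}|\nabla\theta|^2+\gamma\theta^{\gamma-1}\Delta\theta$ and using the estimates above gives
\[
\Delta v\le C'c_1\,\rho^{2\beta+2-2\alpha}-c_0c_2\eta\,\rho^{-\eta-2}\le0\quad\text{on }C(R_0,\theta_0)
\]
once $R_0$ is large, because $2\beta+2-2\alpha<-\eta-2$ by the choice of $\eta$; thus $v$ is superharmonic there. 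Fixing $c_2$ and then $c_1$ large enough that $v\ge c_*:=c_2R_0^{-\eta}$ on the "sides" $\{\theta=\theta_0\}$ and the "cap" $\{\rho=R_0\}$ of the cone, the pasting lemma for superharmonic functions shows that
\[
w:=\min(v,c_*)\ \text{on }C(R_0,\theta_0),\qquad w:=c_*\ \text{on }\bar M\setminus C(R_0,\theta_0),
\]
is continuous, nonnegative and superharmonic on $M$. Finally $w(x)\to0$ as $x\to x_0$, since then eventually $x\in C(R_0,\theta_0)$ and $c_1\theta(x)^{\gamma}+c_2\rho(x)^{-\eta}\to0$, whereas $\inf_{\bar M\setminus C(R,\delta)}w\ge\min\bigl(c_2R^{-\eta},c_1\delta^{\gamma},c_*\bigr)>0$ for all $R\ge R_0$ and $\delta\le\theta_0$; so $w$ is a barrier at $x_0$, and the Perron argument (cf. \cite{HoVa}, using (A) and (B)) finishes the proof.

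The hard part will be the estimate for $\Delta\theta$ on the truncated cone: one must quantify the deviation of the geodesic spheres from being conformally round using only the Ricci lower bound (no sectional lower bound is available), and track the exact power of $\rho$ — it is precisely the matching of the resulting exponent $2\beta+2-2\alpha$ against the radial gain $-\eta-2$ that produces the sharp hypothesis $\beta<\alpha-2$. Everything else (the Hessian/Laplacian comparison, the gluing via the pasting lemma, and the Perron scheme) should be routine; I also note that the statement is subsumed by the general solvability criterion of \cite{HoVa}, while Hsu's own proof in \cite{Hs} proceeds probabilistically, via the almost-sure convergence of Brownian motion to a point of $M(\infty)$ in the cone topology.
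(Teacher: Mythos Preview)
This theorem is not proved in the paper; it is quoted from \cite{Hs}, and Hsu's own argument is probabilistic, as you note. The paper's only remark towards an analytic proof is the observation (just after Theorem~\ref{HVkor2}) that on a Cartan--Hadamard manifold a Ricci lower bound already yields a sectional lower bound: for unit $v\perp w$, complete $w$ to an orthonormal basis $w=e_1,\dots,e_{n-1}$ of $v^\perp$; since every $K(v,e_i)\le 0$ one has $K(v,w)\ge\sum_i K(v,e_i)=\ric(v)$. Hence $\ric_x\ge-\rho(x)^{2\beta}$ forces $\sect_x\ge-\rho(x)^{2\beta}$, and with $\phi=\alpha>2$ and any $0<\varepsilon<2(\alpha-2-\beta)$ the hypotheses of Theorem~\ref{HVkor1} hold; so the PDE route the paper implicitly endorses is simply to quote \cite{HoVa}. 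Your parenthetical ``no sectional lower bound is available'' is thus the first thing to correct.

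Your outline follows the same Perron/barrier pattern, and your candidate $c_1\theta^\gamma+c_2\rho^{-\eta}$ is the unsmoothed cousin of the paper's $\varphi=A(R_4^{\delta}\rho^{-\delta}+h)$ in Lemma~\ref{supsol}; but you work with the raw angle $\theta$ rather than its mollification $h=\mathcal{P}(\tilde h)$, and this is where the argument has a genuine gap. The claimed bound $|\Delta\theta|\le C\bigl(\rho^{-2\alpha}\theta^{-1}+\rho^{2\beta+2-2\alpha}\bigr)$ asks for pointwise control of the sphere metrics $g_r$ \emph{and their angular derivatives} (Christoffel symbols), and a Ricci lower bound by itself does not provide that; even after the Ricci$\Rightarrow$sectional step above, radial Jacobi-field comparison controls the size of $g_r$ but not its variation in the angular directions, so the displayed estimate does not follow in any standard way. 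This is exactly why \cite{HoVa} and Section~\ref{sec_barrier} replace $\theta$ by the smoothed $h$: averaging over balls of radius $\sim 1/b(\rho)$ trades the uncontrolled $\Delta\theta$ for the Hessian bound of Lemma~\ref{arvio_lause}, which needs only Rauch/Hessian comparison under two-sided sectional bounds. The cleanest repair of your sketch is therefore to insert the one-line Ricci$\Rightarrow$sectional argument and then either invoke Theorem~\ref{HVkor1} directly or run the barrier construction of Section~\ref{sec_barrier} with $\mathcal{A}\equiv 1$.
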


The asymptotic Dirichlet problem has been studied also in a more general
context of $p$-harmonic and ${\mathcal{A}}$-harmonic functions as well as for
operators ${\mathcal{Q}}$. For the $p$-Laplace equation the asymptotic
Dirichlet problem was solved in \cite{Ho} on Cartan-Hadamard manifolds of
pinched negative sectional curvature by modifying the direct approach of
Anderson and Schoen \cite{andschoen}. In \cite{HoVa} Holopainen and
V\"{a}h\"{a}kangas studied the asymptotic Dirichlet problem for the
$p$-Laplace equation and the $p$-regularity of a point $x_{0}$ at infinity on
a Cartan-Hadamard manifold $M$ under a curvature assumption
\[
-b\bigl(\rho(x)\bigr)^{2}\leq\sect_{x}\leq-a\bigl(\rho(x)\bigr)^{2}%
\]
in $U\cap M$, where $U$ is a neighborhood of $x_{0}\in M(\infty)$. Here
$a,b\colon\lbrack0,\infty)\rightarrow\lbrack0,\infty),\ b\geq a,$ are smooth
functions subject to certain growth conditions; see Section~\ref{sec_preli}.
The following two special cases of functions $a$ and $b$ are of particular interest.

\begin{Thm}
\cite[Corollary 3.22]{HoVa}\label{HVkor1} Let $\phi>1$ and $\varepsilon>0$.
Let $x_{0}\in M(\infty)$ and let $U$ be a neighborhood of $x_{0}$ in the cone
topology. Suppose that
\begin{equation}
\label{kor1_kaava}-\rho(x)^{2\phi-4-\varepsilon}\le\sect_{x}\le-\frac
{\phi(\phi-1)}{\rho(x)^{2}}%
\end{equation}
for every $x\in U\cap M$. Then $x_{0}$ is a $p$-regular point at infinity for
every $p\in\bigl(1,1+(n-1)\phi\bigr)$.
\end{Thm}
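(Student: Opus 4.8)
The plan is to obtain Theorem~\ref{HVkor1} as a special case of the general $p$-regularity criterion of Holopainen and V\"ah\"akangas recalled in Section~\ref{sec_preli}, by exhibiting functions $a$ and $b$ that realize the pinching \eqref{kor1_kaava} and at the same time satisfy the growth hypotheses imposed there on the pair $(a,b)$. The natural choice is
\[
a(t)=\frac{\sqrt{\phi(\phi-1)}}{t},\qquad b(t)=t^{\phi-2-\varepsilon/2}
\]
for $t$ large, extended smoothly and positively to all of $[0,\infty)$ with $b\ge a$; then \eqref{kor1_kaava} says precisely that $-b(\rho(x))^2\le\sect_x\le-a(\rho(x))^2$ for $x\in U\cap M$. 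It remains (i) to verify that $(a,b)$ belongs to the admissible class of Section~\ref{sec_preli}, and (ii) to read off the permitted range of $p$.

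First I would pass to the associated rotationally symmetric comparison models: let $f$ and $g$ solve the Jacobi equations $f''=a^2f$ and $g''=b^2g$ with $f(0)=g(0)=0$, $f'(0)=g'(0)=1$. Since $a^2(t)=\phi(\phi-1)t^{-2}$ is of Euler type and $t\mapsto t^\phi$ solves $u''=\phi(\phi-1)t^{-2}u$, one gets $f(t)\asymp t^\phi$ as $t\to\infty$; the function $g$ is controlled by a Riccati/comparison estimate, the point being that the strict inequality $\varepsilon>0$ keeps $g$ inside the growth class required in Section~\ref{sec_preli}, in the same way that $\beta<\alpha-2$ is needed in Theorem~\ref{HsuThm2}. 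By the Hessian comparison theorem, \eqref{kor1_kaava} then yields two-sided bounds for $\Hess\rho$ --- equivalently, for the second fundamental forms of the geodesic spheres $\partial B(o,t)$ --- in terms of $f'/f$ and $g'/g$, which is the geometric input needed below.

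The heart of the argument is the construction, in a cone-topology neighborhood of $x_0$ contained in $U$, of a barrier for the Dirichlet problem at infinity: a bounded nonnegative ${\mathcal Q}$-supersolution $w$ on that neighborhood intersected with $M$ (with ${\mathcal Q}$ the $p$-Laplacian) that extends continuously to $\bar M$ with $w(x_0)=0$ and stays bounded away from $0$ on the part of the boundary lying in $M$. Following the scheme of \cite{HoVa}, $w$ is assembled from an \emph{angular} term, obtained by comparing a $p$-harmonic function on truncated cones about the ray from $o$ to $x_0$ and estimated by means of the \emph{lower} curvature bound through $g$, together with a \emph{radial} term that forces $w\to 0$ as $\rho\to\infty$. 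The radial term exists exactly when the model with warping $f(t)\asymp t^\phi$ is $p$-hyperbolic, that is, when
\[
\int^{\infty} f(t)^{-\frac{n-1}{p-1}}\,dt=\int^{\infty} t^{-\frac{(n-1)\phi}{p-1}}\,dt<\infty,
\]
and the last integral converges if and only if $(n-1)\phi/(p-1)>1$, i.e. $1<p<1+(n-1)\phi$, which is precisely the range in the statement. Granted the barrier, the solution of the asymptotic problem is produced by exhausting $M$ with ${\mathcal Q}$-regular domains (property (A)) and passing to a limit (property (B)); comparison of this limit with $\pm w+\mathrm{const}$ near $x_0$ shows that it is continuous at $x_0$ with the prescribed boundary value, i.e. that $x_0$ is $p$-regular.

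I expect the main obstacle to be the barrier construction itself, and inside it the simultaneous control of the angular and radial estimates: the radial part wants $f$, hence $\phi$, small, whereas the angular part wants $g$ not too much larger than $f$, and one must check that the choice $b(t)=t^{\phi-2-\varepsilon/2}$ keeps the quantities entering the hypotheses of Section~\ref{sec_preli} --- essentially the size of $g'/g$ relative to $f'/f$ along geodesic spheres, together with the integrals comparing $f$ and $g$ that occur there --- below the required thresholds, uniformly over the neighborhood $U$. The remaining points, namely the smooth cut-off of $a$ and $b$ near the origin and the reduction of a general neighborhood $U$ of $x_0$ to a standard truncated cone, are routine.
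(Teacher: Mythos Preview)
This theorem is not proved in the present paper; it is quoted verbatim from \cite[Corollary~3.22]{HoVa} as background, so there is no proof here to compare against directly. That said, your reduction is exactly the one the paper itself carries out in the closely related setting of Theorem~\ref{thm1}: Example~\ref{ex1} verifies that the choices $a(t)=\sqrt{\phi(\phi-1)}/t$ and $b(t)=t^{\phi-2-\varepsilon/2}$ (for large $t$, smoothly extended) satisfy the structural hypotheses \eqref{A1}--\eqref{A7} with $\phi_1=\phi$, after which the barrier machinery of Section~\ref{sec_barrier} applies. For the $p$-Laplacian one has $B_0=(p-2)/2$, and the condition $B_0<\tfrac12((n-1)\phi_1-1)$ of Theorem~\ref{ThmMain} becomes precisely $p<1+(n-1)\phi$, matching your range.

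One minor correction of mechanism rather than outcome: the radial part of the barrier in \cite{HoVa} and in Section~\ref{sec_barrier} is the explicit power $\rho^{-\delta}$, not a $p$-capacity potential of the rotationally symmetric model. The admissible range of $p$ therefore emerges from the supersolution computation (the analogue of Lemma~\ref{supsol}) via the estimate $\rho f_a'(\rho)/f_a(\rho)\to\phi$, rather than from the $p$-hyperbolicity integral $\int^\infty f_a(t)^{-(n-1)/(p-1)}\,dt$ you wrote. The two conditions happen to coincide for $f_a(t)\asymp t^\phi$, but your phrasing suggests a slightly different argument than the one actually used.
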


\begin{Thm}
\cite[Corollary 3.23]{HoVa}\label{HVkor2} Let $k>0$ and $\varepsilon>0$. Let
$x_{0}\in M(\infty)$ and let $U$ be a neighborhood of $x_{0}$ in the cone
topology. Suppose that
\begin{equation}
\label{kor2_kaava}-\rho(x)^{-2-\varepsilon}e^{2k\rho(x)}\le\sect_{x}\le-k^{2}%
\end{equation}
for every $x\in U\cap M$. Then $x_{0}$ is a $p$-regular point at infinity for
every $p\in(1,\infty)$.
\end{Thm}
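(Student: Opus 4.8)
The plan is to obtain Theorem~\ref{HVkor2} as a special case of the general solvability and regularity result of Holopainen and V\"ah\"akangas (recalled in Section~\ref{sec_preli}), applied with a suitable choice of radial comparison functions. Specifically, I would take $a\equiv k$ and $b(t)=t^{-1-\varepsilon/2}e^{kt}$ for large $t$, modified on a bounded interval so that $b$ is smooth and positive and the pair $(a,b)$ fulfils the structural assumptions recalled in Section~\ref{sec_preli}. With this choice the hypothesis \eqref{kor2_kaava} is literally the two-sided bound $-b\bigl(\rho(x)\bigr)^2\le\sect_x\le-a\bigl(\rho(x)\bigr)^2$ on $U\cap M$, so what remains is to check that $(a,b)$ satisfies the structural conditions and to read off the resulting range of admissible exponents $p$.

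First I would deal with $a$: being constant it is automatically smooth and monotone, and the solution $f_a$ of $f_a''=a^2 f_a$, $f_a(0)=0$, $f_a'(0)=1$ is $f_a(t)=k^{-1}\sinh(kt)$, which is strictly increasing, satisfies $\int_1^\infty dt/f_a(t)<\infty$, and obeys $f_a(t)\asymp f_a'(t)\asymp e^{kt}$ for large $t$. Next I would verify the conditions coupling $b$ with $a$ (equivalently with $f_a$); these amount to the finiteness of integrals of the type $\int_1^\infty t\,b(t)^2/f_a'(t)^2\,dt$, up to harmless lower-order corrections dictated by the precise formulation in Section~\ref{sec_preli}. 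Since $b(t)^2/f_a'(t)^2\asymp t^{-2-\varepsilon}$, even after the extra factor $t$ one is left with $\int_1^\infty t^{-1-\varepsilon}\,dt<\infty$, which is finite precisely because $\varepsilon>0$.

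Granting that the hypotheses of the general theorem hold, the range of admissible $p$ is governed by the asymptotic growth of $f_a$: the theorem gives $p$-regularity of $x_0$ for every $p<1+(n-1)\mu_a$, where $\mu_a$ measures the growth rate of $f_a$ (in the polynomial case $a(t)^2=\phi(\phi-1)/t^2$ underlying Theorem~\ref{HVkor1} one has $f_a(t)=t^\phi$ and $\mu_a=\phi$, whence $p<1+(n-1)\phi$). Here $f_a$ grows exponentially, so $t f_a'(t)/f_a(t)=kt\coth(kt)\to\infty$ as $t\to\infty$; thus $\mu_a=\infty$, the constraint on $p$ is vacuous, and $x_0$ is $p$-regular for every $p\in(1,\infty)$, as claimed.

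The main obstacle is the verification in the second step. These coupling conditions are exactly what prevents the lower curvature bound from being too negative compared with the upper one, and the delicate point is that the polynomial factor $t^{-1-\varepsilon/2}$ in $b$ must outweigh the exponential growth $f_a\asymp e^{kt}$ sufficiently to make the relevant integrals converge; this is where the strict inequality $\varepsilon>0$ enters. One should also take care that the modification of $b$ on the bounded interval, and the normalizations built into the standing assumptions (e.g. any required monotonicity or matching of $a$ and $b$ near the origin), are arranged consistently.
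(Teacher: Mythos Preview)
Your approach is correct and is precisely what the paper does in Example~\ref{ex2}: the same choice $a\equiv k$, $b(t)=t^{-1-\varepsilon/2}e^{kt}$ is made there, the conditions \eqref{A1}--\eqref{A7} are verified (the relevant coupling condition being the pointwise limit \eqref{A7} rather than an integral, but your computation $b(t)/f_a'(t)\asymp t^{-1-\varepsilon/2}$ handles both formulations for any $C_4<\varepsilon/2$), and the observation that $C_1$ in \eqref{A1} can be taken arbitrarily large corresponds to your $\mu_a=\infty$, removing any constraint on $p$. Theorem~\ref{HVkor2} itself is simply quoted from \cite{HoVa} without a separate proof in the present paper.
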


Roughly speaking, the $p$-regularity of $x_{0}\in M(\infty)$ means that, at
the point $x_{0}$, the Dirichlet problem for the $p$-Laplace equation is
solvable with continuous boundary data; see \cite{HoVa} and \cite{Va1} for the
details. In particular, the Dirichlet problem at infinity for the $p$-Laplace
equation is solvable if every point $x_{0}\in M(\infty)$ is $p$-regular.
The case of the usual Laplacian ($p=2$) is covered by Theorem~\ref{HVkor1} 
for every $\phi>1$ since then $1+(n-1)\phi>2$. Thus the assumptions in Theorem~\ref{HVkor1} 
are slightly weaker than those in Theorem~\ref{HsuThm2}. Note that using the Ricci curvature 
instead of the sectional makes no essential difference since all sectional curvatures are 
nonpositive. On the other hand, Theorem~\ref{HVkor2} and Theorem~\ref{HsuThm1} 
are closely related in the case $p=2$ but, nevertheless, slightly different and neither one 
implies the other directly.

In \cite{Va1} V\"ah\"akangas generalized the method and results due to
Cheng~\cite{cheng} and showed that $x_{0}\in M(\infty)$ is $p$-regular if it
has a neighborhood $V$ in the cone topology such that the radial sectional
curvatures in $V\cap M$ satisfy a pointwise pinching condition
\[
\lvert\sect_{x}(P) \rvert\le C\lvert\sect_{x}(P^{\prime}) \rvert
\]
for some constant $C$ and have an upper bound
\[
\sect_{x}(P) \le-\frac{\phi(\phi-1)}{\rho^{2}(x)}
\]
for some constant $\phi>1$ with $1<p<1+\phi(n-1)$. Above $P$ and $P^{\prime}$
are any $2$-dimensional subspaces of $T_{x} M$ containing the (radial) vector
$\nabla\rho(x)$. It is worth observing that no curvature lower bounds are
needed here. In fact, V\"ah\"akangas considered even a more general case of
${\mathcal{A}}$-harmonic functions (of type $p\in(1,\infty)$), i.e. continuous
weak solutions to the equation
\[
-\diver{\mathcal{A}}(\nabla u)=0,
\]
where ${\mathcal{A}}$ is subject to certain conditions; for instance
$\langle{\mathcal{A}}(V),V\rangle\approx\lvert V \rvert^{p},\ 1<p<\infty,$ and
${\mathcal{A}}(\lambda V)=\lambda\lvert\lambda\rvert^{p-2}{\mathcal{A}}(V)$
for all $\lambda\in\mathbb{R}\setminus\{0\}$. Note that this class of
equations is different from ours in the current paper, although both include
the $p$-Laplace equation. Recently, V\"ah\"akangas generalized Theorems
\ref{HVkor1} and \ref{HVkor2} to cover the case of ${\mathcal{A}}$-harmonic
functions as well; see \cite[Corollary 3.7, Corollary 3.8, Remark 3.9]{Va2}.

In \cite{CR} Collin and Rosenberg constructed harmonic diffeomorphisms from the complex plane $\C$ onto the 
hyperbolic plane $\hyp^2$ disproving a conjecture of Schoen and Yau \cite{ScYau}. A bit later G{\'a}lvez and 
Rosenberg \cite{GR} extended the result to any Hadamard surface $M$ whose curvature is bounded 
from above by a negative constant by proving the existence of harmonic diffeomorphisms from $\C$ onto $M$. 
The proofs in both papers are based on the construction of an entire minimal surface 
$\Sigma=(x,u(x))\subset \hyp^2\times \R$ ($\Sigma\subset M\times\R$, resp.) of conformal type $\C$, and thus on the 
construction of an entire solution $u$ to the minimal graph equation that is unbounded both from above and from 
below. Harmonic diffeomorphisms $\C\to \hyp^2$ ($\C\to M$, resp.) are then obtained by composing conformal mappings
(diffeomorphisms) $\C\to\Sigma$ with harmonic vertical projections $\Sigma\to\hyp^2$ ($\Sigma\to M$, resp.). 
A crucial method in the construction of an entire unbounded solution $u$ to the minimal graph equation is to solve
the Dirichlet problem on unbounded ideal polygons with boundary values $\pm\infty$ on the sides of the ideal polygons. The unexpected result of Collin and Rosenberg has raised interest 
in (entire) minimal hypersurfaces in the product space $M\times\R$, where $M$ is a Cartan-Hadamard manifold. 
Motivated by the recent research in this field (see for example, \cite{DHL}, \cite{ER}, \cite{MR}, \cite{NR}, 
\cite{RT}, \cite{RSS}, \cite{ET}, \cite{Spruck}) we investigate in the present paper a possible extension of the
results for the $p$-Laplacian obtained in \cite{HoVa} to the minimal graph PDE.

Of particular interest is the following special case of our main theorem
(Theorem~\ref{ThmMain}).

\begin{Thm}
\label{thm1} Let $M$ be a Cartan-Hadamard manifold of dimension $n\ge2$. Fix
$o\in M$ and set $\rho(\cdot)=d(o,\cdot)$, where $d$ is the Riemannian
distance in $M$. Assume that
\begin{equation}
\label{curv_ass_minim}-\rho(x)^{2\left(  \phi-2\right)  -\varepsilon}%
\leq\sect_{x}(P)\leq-\dfrac{\phi(\phi-1)}{\rho(x)^{2}},
\end{equation}
for some constants $\phi>1$ and $\varepsilon>0,$ where $\sect_{x}(P)$ is the
sectional curvature of a plane $P\subset T_{x}M$ and $x$ is any point in the
complement of a ball $B(o,R_{0})$. Then the asymptotic Dirichlet problem for
the minimal graph equation \eqref{min_eqn} is uniquely solvable for any
boundary data $f\in C\bigl(M(\infty)\bigr)$.
\end{Thm}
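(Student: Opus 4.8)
The plan is to obtain Theorem~\ref{thm1} as the minimal-graph case of the general solvability theorem Theorem~\ref{ThmMain}, which I would phrase as follows: if $\mathcal{A}$ satisfies \eqref{Agrowth} and \eqref{Bgrowth}, conditions (A) and (B) hold, and the sectional curvatures obey a pinching $-b(\rho(x))^2\le\sect_x(P)\le-a(\rho(x))^2$ outside a compact set for functions $a,b$ meeting the growth conditions of Section~\ref{sec_preli}, then the asymptotic Dirichlet problem for $\mathcal{Q}$ is uniquely solvable. To pass from this to Theorem~\ref{thm1} I would verify: for $\mathcal{M}$ one has $\mathcal{A}(t)=(1+t)^{-1/2}$ and $\mathcal{B}(t)=-\tfrac{1}{2(1+t)}$, so \eqref{Agrowth} holds with $p=1$, $A_0=1$ (since $(1+t)^{-1/2}\le t^{-1/2}$), in fact with any $p\in(1,2]$ after enlarging $A_0$, and \eqref{Bgrowth} holds with $B_0=0$ (since $-\tfrac{1}{2t}<-\tfrac{1}{2(1+t)}\le0$), while $t\mathcal{A}(t^2)=t(1+t^2)^{-1/2}\to0$ as $t\to0+$; conditions (A) and (B) for $\mathcal{M}$ were recorded in the introduction via \cite[Theorem~2]{DLR} and \cite[Theorem~1.1]{Spruck}; and \eqref{curv_ass_minim} is exactly the pinching \eqref{kor1_kaava} of Theorem~\ref{HVkor1}, corresponding to $a(t)=\sqrt{\phi(\phi-1)}\,t^{-1}$ and $b(t)=t^{\phi-2-\varepsilon/2}$ for $\rho\ge R_0$ (extended smoothly to $[0,R_0]$), for which the comparison function $f_a$ — solving $f_a''=a^2f_a$, $f_a(0)=0$, $f_a'(0)=1$ — grows like $t^\phi$, since $t^\phi$ and $t^{1-\phi}$ span the solutions of $f''=\phi(\phi-1)t^{-2}f$. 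The quantitative growth hypotheses that Theorem~\ref{ThmMain} puts on $a,b,f_a,f_b$ are then the ones already checked in \cite[Corollary~3.22]{HoVa}, and the remaining hypothesis coupling the exponent to the curvature (for the $p$-Laplacian: $p<1+(n-1)\phi$) is satisfiable since $1<1+(n-1)\phi$ for every $n\ge2$ and $\phi>1$.

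For Theorem~\ref{ThmMain} itself I would argue by exhaustion. Fix a continuous extension $F\in C(\bar M)$ of the boundary data $f$, and use (A) to solve the Dirichlet problem for $\mathcal{Q}$ in each $\Omega_k$ with data $F|\partial\Omega_k$, obtaining $u_k\in C(\bar\Omega_k)$. The comparison principle — available because \eqref{Bgrowth} forces $t\mapsto t\mathcal{A}(t^2)$ to be strictly increasing — bounds the $u_k$ by $\sup_{M(\infty)}|f|$, and (B) yields a subsequence converging locally uniformly on $M$ to a $\mathcal{Q}$-solution $u$ with $\sup_M|u|\le\sup_{M(\infty)}|f|$. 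The heart of the matter is to show $u$ extends continuously to $M(\infty)$ with $u|M(\infty)=f$; by continuity of $f$ this reduces to producing, for each $x_0\in M(\infty)$ and each $\eta>0$, a cone neighbourhood $V\ni x_0$ and a $\mathcal{Q}$-supersolution $w\ge0$ on $V$ with $w(x)\to0$ as $x\to x_0$ and $w\ge\eta$ on $V\setminus V'$ for some smaller cone neighbourhood $V'\ni x_0$ (and, symmetrically, a $\mathcal{Q}$-subsolution for the lower bound). Since such barriers are uniform in $k$, they pass to the limit $u$.

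The barrier $w$ is constructed from the curvature bounds in the manner of \cite{HoVa}. The upper bound $\sect_x\le-a(\rho)^2$ gives, by Hessian comparison, $\Hess\rho\ge(f_a'/f_a)(\rho)\bigl(g-d\rho\otimes d\rho\bigr)$ — hence rapid growth of the geodesic spheres and good separation of ideal boundary points — and the lower bound $\sect_x\ge-b(\rho)^2$ gives $\Hess\rho\le(f_b'/f_b)(\rho)\bigl(g-d\rho\otimes d\rho\bigr)$, so that $\Delta\rho$ is controlled on both sides. Taking $w$ to be (a decreasing radial factor of $\rho$) times (a function of a Gromov-product-type angular variable comparing $x$ with $x_0$), one verifies $\mathcal{Q}[w]\le0$ by expanding $\diver\mathcal{A}(|\nabla w|^2)\nabla w$ into the term $\mathcal{A}(|\nabla w|^2)\Delta w$ plus a term governed by $\mathcal{A}'=\mathcal{A}\mathcal{B}$, and dominating the error using \eqref{Bgrowth} together with the two Hessian comparisons; the growth conditions on $a$ and $b$ are precisely what make the radial factor decay fast enough to absorb these errors. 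This is where $\phi>1$ (equivalently $a(t)\ge\sqrt{\phi(\phi-1)}/t$) and $\varepsilon>0$ (the upper bound on $b$) are used.

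I expect the principal obstacle to be twofold. First, the barrier estimates of \cite{HoVa} are tailored to the homogeneous $p$-Laplacian, so one must re-derive the relevant inequalities using only the structure conditions \eqref{Agrowth}--\eqref{Bgrowth}; in particular $\mathcal{M}$ sits at the borderline exponent, and the compactness step therefore cannot rely on $p$-Laplacian regularity theory but must instead use the interior gradient estimate of \cite{Spruck}, that is, condition (B). Second, one must confirm that the growth requirements Theorem~\ref{ThmMain} imposes on $a,b$ (and $f_a,f_b$) really are met by $a(t)=\sqrt{\phi(\phi-1)}/t$ and $b(t)=t^{\phi-2-\varepsilon/2}$ — the computation already done in \cite[Corollary~3.22]{HoVa}. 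Uniqueness is then routine: two bounded $\mathcal{Q}$-solutions on $M$ with the same continuous limit on $M(\infty)$ must coincide, by the comparison principle on each $\Omega_k$ and letting $k\to\infty$.
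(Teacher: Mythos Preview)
Your proposal is correct and follows essentially the same strategy as the paper: deduce Theorem~\ref{thm1} from the general Theorem~\ref{ThmMain} by checking the structure conditions on $\mathcal{A},\mathcal{B}$ for the minimal graph operator (in particular $B_0=0$, so the constraint $B_0<\tfrac12((n-1)\phi_1-1)$ is automatic), invoking (A) and (B) via \cite{DLR} and \cite{Spruck}, and verifying the growth hypotheses \eqref{A1}--\eqref{A7} for $a(t)=\sqrt{\phi(\phi-1)}/t$, $b(t)=t^{\phi-2-\varepsilon/2}$ --- which the paper does in Example~\ref{ex1}; and then prove Theorem~\ref{ThmMain} by exhaustion, comparison, compactness, and barrier construction from the curvature bounds.

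One point where your sketch diverges from the paper: you describe the barrier as a \emph{product} of a radial decay and an angular factor (in the Anderson--Schoen spirit), whereas the paper's barrier is the \emph{sum} $\varphi=A\bigl(R_4^{\delta}\rho^{-\delta}+h\bigr)$, with $h$ the smoothed angular function of \eqref{extend_h}. The supersolution property is then reduced to the single scalar inequality \eqref{Qsuper_cond}, which is verified in Lemma~\ref{supsol} by expanding $\diver\mathcal{A}(|\nabla\varphi|^2)\nabla\varphi=\mathcal{A}(|\nabla\varphi|^2)\bigl(\Delta\varphi+\mathcal{B}(|\nabla\varphi|^2)\langle\nabla|\nabla\varphi|^2,\nabla\varphi\rangle\bigr)$ and estimating the $\mathcal{B}$-term against $-\Delta\varphi$ using the Hessian comparisons and the estimates of Lemma~\ref{perusta} --- exactly the mechanism you outline. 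The additive form is what lets the argument recycle the first- and second-order estimates on $h$ and $\rho^{-\delta}$ from \cite{HoVa} separately; a multiplicative ansatz would require reworking those estimates. Your uniqueness sketch (comparison on $\Omega_k$, let $k\to\infty$) is in the same spirit as the paper's, which instead observes that the set $\{\tilde u>u+\varepsilon\}$ is relatively compact and applies Corollary~\ref{cor_uniq} directly there.
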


So far, the solvability of the asymptotic Dirichlet problem for the minimal graph equation has been
established only under hypothesis which included the condition 
$\sect_{x}(P)\leq c<0$ (see \cite{GR}, \cite{RT}). In \cite{RT} Ripoll and Telichevesky 
introduced the following \emph{strict convexity condition\/} (SC condition) that applies to equations 
\eqref{M_equ}. A Cartan-Hadamard manifold $M$ satisfies the strict convexity condition if, for every $x\in M(\infty)$
and relatively open subset $W\subset M(\infty)$ containing $x$, there exists a $C^2$ open subset 
$\Omega\subset M$ such that $x\in\Int(M(\infty))\subset W$ and $M\setminus\Omega$ is convex.  
They proved that the asymptotic Dirichlet problem for \eqref{M_equ} on $M$ is solvable if $\sect\le -k^2<0$
and $M$ satisfies the SC condition; see \cite[Theorem 7]{RT}. Furthermore, they showed by modifying 
Anderson's and Borb\'{e}ly's arguments 
that $M$ satisfies the SC condition provided there exist constants $k>0,\ \varepsilon>0$,
and $R^{\ast}$ such that
\[
-\rho(x)^{-2-\varepsilon}e^{2k\rho(x)}\le \sect_{x}\le -k^2
\]
for all $x\in M\setminus B(o,R^{\ast})$ thus generalizing Theorem~\ref{HVkor2}; see \cite[Theorem 14]{RT}.

The main theorem of the paper is the following solvability result for the
asymptotic Dirichlet problem for operators ${\mathcal{Q}}$ that satisfy
\eqref{Agrowth}, \eqref{Bgrowth}, and conditions (A) and (B) under curvature
assumption
\[
-b\bigl(\rho(x)\bigr)^{2}\leq\sect_{x}\leq-a\bigl(\rho(x)\bigr)^{2}%
\]
on $M$, where $a,b\colon\lbrack0,\infty)\rightarrow\lbrack0,\infty),\ b\geq
a,$ are smooth functions satisfying assumptions \eqref{A1}-\eqref{A7} (see
Section~\ref{sec_preli}). The constant $\phi_{1}$ below is related to the
assumption~\eqref{A1}. More precisely,
\[
\phi_{1}=\frac{1+\sqrt{1+4C_{1}^{2}}}{2}>1,
\]
where $C_{1}>0$ is a constant such that, for all $t\geq T_{1}>0$,
\[
a(t)%
\begin{cases}
=C_{1}t^{-1} & \text{if $b$ is decreasing,}\\
\geq C_{1}t^{-1} & \text{if $b$ is increasing.}
\end{cases}
\]
We also recall that $B_{0}$ is the constant in the assumption \eqref{Bgrowth}.

\begin{Thm}
\label{ThmMain} Let $M$ be a Cartan-Hadamard manifold of dimension $n\ge2$.
Fix $o\in M$ and set $\rho(\cdot)=d(o,\cdot)$, where $d$ is the Riemannian
distance in $M$. Assume that
\begin{equation}
\label{curv_assump_gen}-(b\circ\rho)^{2}(x)\le\sect_{x}(P)\le-(a\circ\rho
)^{2}(x)
\end{equation}
for all $x\in M$ and all $2$-dimensional subspaces $P\subset T_{x}M$. Then the
asymptotic Dirichlet problem for the equation \eqref{M_equ} is uniquely
solvable for any boundary data $f\in C\bigl(M(\infty)\bigr)$ whenever
$B_{0}<\tfrac12((n-1)\phi_{1}-1)$.
\end{Thm}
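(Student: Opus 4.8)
The plan is to adapt the Perron method together with the barrier construction of Holopainen--V\"ah\"akangas \cite{HoVa}, replacing the $p$-Laplacian by a general operator ${\mathcal Q}$ satisfying \eqref{Agrowth}, \eqref{Bgrowth}, (A), and (B). First I would fix $f\in C\bigl(M(\infty)\bigr)$, extend it to a continuous function on $\bar M$, and form the Perron family of ${\mathcal Q}$-subsolutions lying below $f$ on $M(\infty)$; using condition (A) and the comparison principle (which follows from the strict monotonicity of $t\mapsto t{\mathcal A}(t^2)$ noted after \eqref{Bgrowth}) one solves the Dirichlet problem in each $\Omega_k$ and, via condition (B), extracts a locally uniform limit $u$ which is a ${\mathcal Q}$-solution on $M$. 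The content of the theorem is then entirely in showing that $u$ extends continuously to each $x_0\in M(\infty)$ with the right boundary value, i.e. that every $x_0\in M(\infty)$ is ``${\mathcal Q}$-regular''. For this one needs barriers at infinity: for each $x_0$ and each neighborhood $U$ of $x_0$ in the cone topology one must produce a nonnegative ${\mathcal Q}$-supersolution $w$ on $U\cap M$ with $w\to 0$ at $x_0$ and $w\geq$ (a fixed positive constant) on $\partial U\cap M$, together with the symmetric ${\mathcal Q}$-subsolution barrier.

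The key step is to build these barriers as radial-type functions of the form $w=A\psi$, where $\psi$ is a carefully chosen function of $\rho$ and of an angular variable (an ``angular distance'' at infinity), exactly as in Section 3 of \cite{HoVa}. The curvature assumption \eqref{curv_assump_gen} with the hypotheses \eqref{A1}--\eqref{A7} on $a,b$ gives, via the Hessian comparison theorem, sharp two-sided bounds for $\Hess\rho$ and hence for $\nabla\rho$, the second fundamental forms of geodesic spheres, and the Laplacian of the comparison functions; these let one estimate ${\mathcal Q}[\psi]$ from the identity
\begin{equation}
\label{Qexpand}
{\mathcal Q}[\psi]={\mathcal A}\bigl(\abs{\nabla\psi}^2\bigr)\Bigl(\Delta\psi+2{\mathcal B}\bigl(\abs{\nabla\psi}^2\bigr)\Hess\psi(\nabla\psi,\nabla\psi)\Bigr).
\end{equation}
The role of \eqref{Bgrowth} is precisely to control the second term: since $-1/(2t)<{\mathcal B}(t)\le B_0/t$, the ``extra'' curvature-type term contributed by the operator is comparable to a multiple of the radial second derivative of $\psi$, with the multiplier pinched between $-1$ and $2B_0$. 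Running the same ODE comparison as in \cite{HoVa} for the profile function, one finds that the supersolution inequality ${\mathcal Q}[\psi]\le 0$ can be achieved exactly when the decay rate of $\psi$ in the radial direction can be taken large enough, and the threshold is governed by the competition between the lower eigenvalue bound $(n-1)a(\rho)\rho\gtrsim (n-1)\phi_1$ for $\Delta\rho$ and the factor $1+2B_0$ coming from ${\mathcal B}$; this is what produces the sharp condition $B_0<\tfrac12\bigl((n-1)\phi_1-1\bigr)$.

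With the barriers in hand, the remaining arguments are standard: the upper barrier $w$ at $x_0$ shows $\limsup_{x\to x_0}u(x)\le f(x_0)$ and the lower barrier the reverse inequality, giving $u\in C(\bar M)$ with $u|M(\infty)=f$; uniqueness follows from the comparison principle applied on the exhausting domains and a maximum principle at infinity (using that $u-v$ attains no interior extremum unless constant, plus $u=v$ on $M(\infty)$). I would also separately record the deduction of Theorem~\ref{thm1} and of the minimal-graph case: for \eqref{min_eqn} one has $B_0=0$, so the condition $B_0<\tfrac12((n-1)\phi_1-1)$ reduces to $(n-1)\phi_1>1$, which holds for every $\phi_1>1$ and $n\ge 2$; and the pinching \eqref{curv_ass_minim} is exactly the instance of \eqref{curv_assump_gen} with $a(t)=\sqrt{\phi(\phi-1)}/t$ and $b(t)=t^{\phi-2-\varepsilon/2}$, for which \eqref{A1}--\eqref{A7} are verified as in \cite[Corollary 3.22]{HoVa}, giving $\phi_1=\phi$.

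The main obstacle I anticipate is the barrier estimate itself: unlike the $p$-Laplacian, the general ${\mathcal A}$ is not homogeneous, so the scaling trick $w=A\psi$ no longer leaves ${\mathcal Q}[w]$ proportional to ${\mathcal Q}[\psi]$, and one must instead argue that for $\abs{\nabla\psi}$ in the relevant range (small, since $\psi$ decays) the functions ${\mathcal A}\bigl(\abs{\nabla(A\psi)}^2\bigr)$ and ${\mathcal B}\bigl(\abs{\nabla(A\psi)}^2\bigr)$ stay within the bounds \eqref{Agrowth}, \eqref{Bgrowth} uniformly in the small parameter $A$, so that the sign of ${\mathcal Q}[A\psi]$ is still dictated by the bracketed expression in \eqref{Qexpand}. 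Making this uniform and checking that the constant $\phi_1$ (rather than a worse constant) is what survives in the final inequality is the delicate point; everything else is a routine adaptation of \cite{HoVa}.
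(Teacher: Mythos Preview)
Your proposal is correct and follows essentially the same route as the paper: solve on an exhaustion of ${\mathcal Q}$-regular domains, extract a limit via (B), and verify boundary regularity using a barrier of the form $\varphi=A(R_4^{\delta}\rho^{-\delta}+h)$ with $h$ the smoothed angular function from \cite{HoVa}; the supersolution inequality reduces to \eqref{Qexpand} and is controlled exactly by the bound $-\tfrac12<{\mathcal B}(t)t\le B_0$, which produces the threshold $B_0<\tfrac12((n-1)\phi_1-1)$.

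One small correction to your framing of the ``main obstacle'': the constant $A$ is not small (in the paper $A=2\max_{\bar M}|\tilde f|$), and the resolution is \emph{not} that $|\nabla(A\psi)|$ stays in a favorable range. Rather, the supersolution condition \eqref{Qexpand} is genuinely scale-invariant: writing $\varphi=Au$, the ratio
\[
\frac{{\mathcal B}(|\nabla\varphi|^2)\,\langle\nabla|\nabla\varphi|^2,\nabla\varphi\rangle}{-\Delta\varphi}
=\frac{\bigl({\mathcal B}(|\nabla\varphi|^2)|\nabla\varphi|^2\bigr)\,\langle\nabla|\nabla u|^2,\nabla u\rangle}{-|\nabla u|^2\Delta u}
\]
depends on $A$ only through the factor ${\mathcal B}(|\nabla\varphi|^2)|\nabla\varphi|^2$, which by \eqref{Bgrowth} lies in $(-\tfrac12,B_0]$ for \emph{every} value of $|\nabla\varphi|$. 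This is why the barrier works for arbitrary $A>0$ without any smallness assumption, and why the constant $B_0$ (rather than anything depending on $f$) appears in the final condition.
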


Observe that $B_{0}=0$ for the minimal graph equation ${\mathcal{M}}[u]=0$,
and therefore the condition $B_{0}<\tfrac12((n-1)\phi_{1}-1)$ is satisfied in
Theorem~\ref{thm1}. On the other hand, in the case of the $p$-Laplacian this
condition reads as $1<p<(n-1)\phi+1$ and it is known to be sharp; see
\cite[Example 2]{Va1}.

Another special case, where the curvature is bounded from above by a negative
constant $-k^{2}$, generalizes Theorem~\ref{HVkor2} and gives another proof
for the above mentioned result of Ripoll and Telichevesky \cite[Theorem 14]{RT}. Here
no further restriction for the constant $B_{0}$ is needed. We refer to
Examples~\ref{ex1} and \ref{ex2} for the verification of the assumptions
\eqref{A1}-\eqref{A7} for the curvature bounds in Theorem~\ref{thm1} and
Corollary~\ref{HVkor2_RT}.

\begin{corollary}
\label{HVkor2_RT} Let $M$ be a Cartan-Hadamard manifold of dimension $n\ge2$.
Fix $o\in M$ and set $\rho(\cdot)=d(o,\cdot)$, where $d$ is the Riemannian
distance in $M$. Assume that
\begin{equation}
\label{curv_assump_k}-\rho(x)^{-2-\varepsilon}e^{2k\rho(x)}\le\sect_{x}%
(P)\le-k^{2}%
\end{equation}
for some constants $k>0$ and $\varepsilon>0$ and for all $x\in M\setminus
B(o,R_{0})$. Then the asymptotic Dirichlet problem for the equation
\eqref{M_equ} is uniquely solvable for any boundary data $f\in C\bigl(M(\infty
)\bigr)$.
\end{corollary}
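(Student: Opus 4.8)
The plan is to deduce Corollary~\ref{HVkor2_RT} directly from Theorem~\ref{ThmMain} by exhibiting smooth functions $a,b\colon[0,\infty)\to[0,\infty)$, $b\ge a$, such that the pinching $-(b\circ\rho)^2\le\sect\le-(a\circ\rho)^2$ holds on all of $M$ and such that $a,b$ satisfy the structural hypotheses \eqref{A1}--\eqref{A7} of Section~\ref{sec_preli}, and then checking that the constant $\phi_1$ associated with this choice of $a$ makes the inequality $B_0<\tfrac12((n-1)\phi_1-1)$ automatic. The natural choice, matching \eqref{curv_assump_k}, is to take $a\equiv k$ (a positive constant) on $[R_0,\infty)$ and $b(t)\asymp t^{-1-\varepsilon/2}e^{k t}$ for large $t$, suitably smoothed near the origin so that $b\ge a$ everywhere, $b$ is increasing for large $t$, and both are smooth on $[0,\infty)$; this is precisely the content that is promised to be verified in Example~\ref{ex2}, so I would invoke that example for the routine bookkeeping.

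The key point in the reduction is the value of $\phi_1$. Since here $b$ is increasing, the relevant condition on $a$ is $a(t)\ge C_1 t^{-1}$ for $t\ge T_1$; with $a\equiv k$ constant this holds for \emph{every} $C_1>0$ once $T_1=T_1(C_1)$ is large enough (namely $T_1\ge C_1/k$). Hence $C_1$ may be taken arbitrarily large, so
\[
\phi_1=\frac{1+\sqrt{1+4C_1^2}}{2}
\]
can be made as large as we wish. Consequently $(n-1)\phi_1-1$ is unbounded above (here $n\ge2$, so $n-1\ge1$), and therefore the hypothesis $B_0<\tfrac12((n-1)\phi_1-1)$ of Theorem~\ref{ThmMain} is satisfied for the given operator $\mathcal{Q}$ regardless of the value of its constant $B_0$. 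This is exactly why ``no further restriction for the constant $B_0$ is needed'' in the statement.

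With $a,b,\phi_1$ in hand the argument is short: apply Theorem~\ref{ThmMain} with these $a$ and $b$ to conclude that the asymptotic Dirichlet problem for $\mathcal{Q}[u]=0$ is uniquely solvable for every $f\in C\bigl(M(\infty)\bigr)$, which is the assertion of the corollary. One small technical caveat is that \eqref{curv_assump_k} is only assumed outside $B(o,R_0)$, whereas Theorem~\ref{ThmMain} wants the pinching on all of $M$; this is harmless because $M$ is Cartan-Hadamard, so $\sect\le0$ everywhere, and the smoothed $a$ can be tapered to be as small as needed on $[0,R_0]$ while the smoothed $b$ can be taken large there, so the two-sided bound extends to all of $M$ without affecting the large-$t$ behaviour that governs \eqref{A1}--\eqref{A7} and $\phi_1$.

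The main obstacle, such as it is, lies not in the logical reduction but in confirming that the specific profile $b(t)\asymp t^{-1-\varepsilon/2}e^{kt}$ genuinely satisfies all of the growth and regularity conditions \eqref{A1}--\eqref{A7} — in particular any integrability or ratio conditions relating $a$ and $b$ and their derivatives — and that the smoothing near $t=0$ can be performed while preserving $b\ge a>0$ and monotonicity for large $t$. Since this verification is precisely what Example~\ref{ex2} supplies, I would simply cite it; the remaining content of the proof is the observation above that constancy of $a$ forces $\phi_1$ (hence the admissible range of $B_0$) to be unrestricted.
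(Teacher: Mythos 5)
Your proposal is correct and follows essentially the same route as the paper: the paper also reduces Corollary~\ref{HVkor2_RT} to Theorem~\ref{ThmMain} by invoking Example~\ref{ex2}, where $a\equiv k$ and $b(t)=t^{-1-\varepsilon/2}e^{kt}$ for large $t$, and then observes that the constancy of $a$ lets one take $C_1$ (hence $\phi_1$) as large as desired so that $B_0<\tfrac12((n-1)\phi_1-1)$ holds automatically. Your explicit remark about tapering $a$ and $b$ on $[0,R_0]$ to reconcile the local curvature assumption with the global pinching hypothesis of Theorem~\ref{ThmMain} is a small technical point the paper leaves implicit, but it does not change the argument.
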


We close this introduction with comments on the necessity of
curvature bounds. It is worth of pointing out that the curvature
bounds used in this paper are essentially the most general ones under which
the asymptotic Dirichlet problem is known to be solvable, for instance, for
the usual Laplace equation (\cite{Hs}), for the $p$-Laplace equation or the
${\mathcal{A}}$-harmonic equation (\cite{HoVa}, \cite{Va2}), or for the
minimal graph equation (\cite{RT} and the current paper). On the other hand, Ancona's and
Borb\'ely's examples (\cite{ancrevista}, \cite{Bor}) show that a (strictly)
negative curvature upper bound alone is not sufficient for the solvability of
the asymptotic Dirichlet problem for the Laplace equation. In \cite{H_ns},
Holopainen generalized Borb\'ely's result to the $p$-Laplace equations, and
very recently, Holopainen and Ripoll \cite{HR_ns} extended these
nonsolvability results to equations \eqref{M_equ}, in particular, to the
minimal graph equation.


\section{Preliminaries}

\label{sec_preli}
In this section we introduce the assumptions for the curvature bounds and
consider the settings in Theorem~\ref{thm1} and Corollary \ref{HVkor2_RT} as examples.

We start with the following \emph{Comparison principle\/} that is crucial for
the rest of the paper. Although its short proof follows the ideas in
\cite[Lemma 3.18]{HKM} (see also \cite[Lemma 3]{RT}) we feel it appropriate to
give the details.

\begin{Lem}
\label{lem_comp} If $u\in W^{1,p}(\Omega)$ is a ${\mathcal{Q}}$-supersolution
and $v\in W^{1,p}(\Omega)$ is a ${\mathcal{Q}}$-subsolution such that
$\varphi=\min(u-v,0)\in W^{1,p}_{0}(\Omega)$, then $u\ge v$ a.e. in $\Omega$.
\end{Lem}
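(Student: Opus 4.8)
The plan is to test the two (weak differential) inequalities against the function $\varphi=\min(u-v,0)$, which by assumption lies in $W^{1,p}_0(\Omega)$ and is therefore an admissible test function (here one also uses \eqref{Agrowth} together with $\lvert\nabla u\rvert,\lvert\nabla v\rvert\in L^p(\Omega)$, as noted right after \eqref{Qsub}). First I would record that $\varphi\le 0$, so $-\varphi\ge 0$, and that $\nabla\varphi=\nabla(u-v)$ a.e.\ on the set $E=\{x\in\Omega: u(x)<v(x)\}$ while $\nabla\varphi=0$ a.e.\ on $\Omega\setminus E$ (standard properties of Sobolev truncations). Using $-\varphi$ as a non-negative test function in the subsolution inequality \eqref{Qsub} for $v$, and $\varphi$ itself in the supersolution inequality for $u$ (equivalently $-\varphi\ge0$ in the subsolution inequality for $-u$), and adding the two resulting inequalities gives
\[
\int_{E}\bigl\langle {\mathcal A}(\lvert\nabla u\rvert^2)\nabla u-{\mathcal A}(\lvert\nabla v\rvert^2)\nabla v,\ \nabla u-\nabla v\bigr\rangle\,dm\le 0 .
\]

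The next step is to show that the integrand is pointwise non-negative, and strictly positive whenever $\nabla u\neq\nabla v$. This is the monotonicity of the vector field $X\mapsto {\mathcal A}(\lvert X\rvert^2)X$. I would prove it by writing, for fixed $X,Y\in T_xM$, the function $g(s)=\bigl\langle {\mathcal A}(\lvert Z(s)\rvert^2)Z(s),X-Y\bigr\rangle$ with $Z(s)=Y+s(X-Y)$, $s\in[0,1]$, so that the integrand above equals $g(1)-g(0)=\int_0^1 g'(s)\,ds$. A direct computation gives
\[
g'(s)=2{\mathcal A}'(\lvert Z\rvert^2)\langle Z,X-Y\rangle^2+{\mathcal A}(\lvert Z\rvert^2)\lvert X-Y\rvert^2 ,
\]
and using ${\mathcal B}={\mathcal A}'/{\mathcal A}>-1/(2t)$ from \eqref{Bgrowth} together with Cauchy--Schwarz $\langle Z,X-Y\rangle^2\le \lvert Z\rvert^2\lvert X-Y\rvert^2$ one sees $g'(s)\ge {\mathcal A}(\lvert Z\rvert^2)\lvert X-Y\rvert^2\bigl(1+2t{\mathcal B}(t)\bigr)>0$ at $t=\lvert Z\rvert^2$ whenever $X\ne Y$ (with the usual care near zero vectors, where the convention ${\mathcal A}(\lvert X\rvert^2)X=0$ and the strict monotonicity of $t\mapsto t{\mathcal A}(t^2)$ recorded in the introduction handle the degenerate case $Z(s)=0$). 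Hence the integrand is $\ge0$ everywhere and $>0$ where $\nabla u\ne\nabla v$.

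Combining, the integral is both $\le 0$ and $\ge 0$, so it is $0$, which forces $\nabla u=\nabla v$ a.e.\ on $E$; consequently $\varphi=\min(u-v,0)\in W^{1,p}_0(\Omega)$ has $\nabla\varphi=0$ a.e.\ on $\Omega$, hence $\varphi$ is (a.e.\ equal to) a constant, and since it belongs to $W^{1,p}_0(\Omega)$ that constant is $0$. Therefore $\min(u-v,0)=0$ a.e., i.e.\ $u\ge v$ a.e.\ in $\Omega$, which is the claim.

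I expect the only genuinely delicate point to be the strict monotonicity estimate at points where $Z(s)$ vanishes or where $\lvert\nabla u\rvert,\lvert\nabla v\rvert$ are small: one must invoke the hypothesis $t{\mathcal A}(t^2)\to0$ as $t\to0+$ and the strict monotonicity of $t\mapsto t{\mathcal A}(t^2)$ (both stated in the introduction) to ensure the comparison still goes through, rather than trying to differentiate through $t=0$. Everything else — the admissibility of the test functions, the truncation identities for $\nabla\varphi$, and the fact that a $W^{1,p}_0$ function with vanishing gradient is zero — is routine.
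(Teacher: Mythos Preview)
Your argument is correct and follows the same overall skeleton as the paper's proof: test both weak inequalities with $-\varphi\ge 0$, obtain
\[
\int_{\{u<v\}}\bigl\langle \mathcal{A}(|\nabla u|^2)\nabla u-\mathcal{A}(|\nabla v|^2)\nabla v,\ \nabla u-\nabla v\bigr\rangle\,dm\le 0,
\]
show the integrand is nonnegative and strictly positive where $\nabla u\ne\nabla v$, deduce $\nabla\varphi=0$ a.e., and conclude $\varphi=0$ from $\varphi\in W^{1,p}_0(\Omega)$.

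The only substantive difference is in how the monotonicity of $X\mapsto\mathcal{A}(|X|^2)X$ is established. The paper does this algebraically: by Cauchy--Schwarz the integrand is bounded below by $\bigl(|\nabla v|\mathcal{A}(|\nabla v|^2)-|\nabla u|\mathcal{A}(|\nabla u|^2)\bigr)\bigl(|\nabla v|-|\nabla u|\bigr)\ge 0$, invoking directly that $t\mapsto t\mathcal{A}(t^2)$ is strictly increasing (stated in the introduction as a consequence of \eqref{Bgrowth}); then from the vanishing of the integrand one gets first $|\nabla u|=|\nabla v|$ and in a second step $\nabla u=\nabla v$. You instead differentiate along the segment $Z(s)=Y+s(X-Y)$ and use $\mathcal{B}(t)>-1/(2t)$ explicitly to show $g'(s)>0$. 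Your route is a little more direct in reaching $\nabla u=\nabla v$ in one stroke, but it requires the extra care you note at points where $Z(s)=0$ (where $\mathcal{A}$ may be singular, e.g.\ for the $p$-Laplacian with $1<p<2$); the paper's purely algebraic argument sidesteps that issue entirely. Either way the conclusion is the same.
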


\begin{proof}
Using the non-negative function $-\varphi$ as a test function we obtain
\begin{align*}
0  &  \ge\int_{\Omega}\bigl\langle{\mathcal{A}}\bigl(\lvert\nabla v \rvert
^{2}\bigr)\nabla v,-\nabla\varphi\bigr\rangle\,dm -\int_{\Omega}%
\bigl\langle{\mathcal{A}}\bigl(\lvert\nabla u \rvert^{2}\bigr)\nabla
u,-\nabla\varphi\bigr\rangle\,dm\\
&  = \int_{\Omega\cap\{u<v\}}\bigl\langle{\mathcal{A}}\bigl(\lvert\nabla v
\rvert^{2}\bigr)\nabla v -{\mathcal{A}}\bigl(\lvert\nabla u \rvert
^{2}\bigr)\nabla u,\nabla v-\nabla u\bigr\rangle\,dm.
\end{align*}
On the other hand, estimating the integrand from below by the Cauchy-Schwarz
inequality we obtain
\begin{align*}
&  \bigl\langle{\mathcal{A}}\bigl(\lvert\nabla v \rvert^{2}\bigr)\nabla v
-{\mathcal{A}}\bigl(\lvert\nabla u \rvert^{2}\bigr)\nabla u,\nabla v-\nabla
u\bigr\rangle\\
\ge\,  &  {\mathcal{A}}\bigl(\lvert\nabla v \rvert^{2}\bigr)\lvert\nabla v
\rvert^{2}-{\mathcal{A}}\bigl(\lvert\nabla v \rvert^{2}\bigr)\lvert\nabla v
\rvert\lvert\nabla u \rvert-{\mathcal{A}}\bigl(\lvert\nabla u \rvert
^{2}\bigr)\lvert\nabla u \rvert\lvert\nabla v \rvert+ {\mathcal{A}%
}\bigl(\lvert\nabla u \rvert^{2}\bigr)\lvert\nabla u \rvert^{2}\\
=\,  &  \left(  \lvert\nabla v \rvert{\mathcal{A}}\bigl(\lvert\nabla v
\rvert^{2}\bigr)-\lvert\nabla u \rvert{\mathcal{A}}\bigl(\lvert\nabla u
\rvert^{2}\bigr) \right)  \left(  \lvert\nabla v \rvert-\lvert\nabla u
\rvert\right)  \ge0,
\end{align*}
where the last inequality holds since $t\mapsto t{\mathcal{A}}(t^{2})$ is
increasing. Hence the non-negative integrand must vanish a.e. in $\Omega
\cap\{u<v\}$. Furthermore, since $t\mapsto t{\mathcal{A}}(t^{2})$ is strictly
increasing, we have $\lvert\nabla u \rvert=\lvert\nabla v \rvert$ a.e. in
$\Omega\cap\{u<v\}$, but then
\[
0 = \bigl\langle{\mathcal{A}}\bigl(\lvert\nabla v \rvert^{2}\bigr)\nabla v
-{\mathcal{A}}\bigl(\lvert\nabla u \rvert^{2}\bigr)\nabla u,\nabla v-\nabla
u\bigr\rangle
= {\mathcal{A}}(\bigl(\lvert\nabla v \rvert^{2}\bigr)\lvert\nabla v-\nabla u
\rvert^{2}
\]
a.e. in $\Omega\cap\{u<v\}$, and so $\nabla\varphi=0$ a.e. in $\Omega
\cap\{u<v\}$. Because $\varphi\in W^{1,p}_{0}(\Omega)$, we finally have
$\varphi=0$ a.e. in $\Omega$ and the claim follows.
\end{proof}

As a consequence, we obtain the uniqueness of ${\mathcal{Q}}$-solutions with
fixed (Sobolev) boundary data.

\begin{corollary}
\label{cor_uniq} If $u\in W^{1,p}(\Omega)$ and $v\in W^{1,p}(\Omega)$ are
${\mathcal{Q}}$-solutions with $u-v\in W^{1,p}_{0}(\Omega)$, then $u=v$ a.e.
in $\Omega$.
\end{corollary}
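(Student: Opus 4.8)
The plan is to deduce Corollary~\ref{cor_uniq} directly from the Comparison principle (Lemma~\ref{lem_comp}) by a symmetry argument. First I would observe that if $u,v\in W^{1,p}(\Omega)$ are $\mathcal Q$-solutions, then in particular $u$ is a $\mathcal Q$-supersolution and $v$ is a $\mathcal Q$-subsolution, so Lemma~\ref{lem_comp} applies once we check the boundary hypothesis $\min(u-v,0)\in W^{1,p}_0(\Omega)$. Since $u-v\in W^{1,p}_0(\Omega)$ by assumption, and the map $s\mapsto\min(s,0)$ is Lipschitz with $\min(0,0)=0$, the standard fact that composing a $W^{1,p}_0$ function with such a Lipschitz map keeps it in $W^{1,p}_0(\Omega)$ gives $\min(u-v,0)\in W^{1,p}_0(\Omega)$. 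Hence Lemma~\ref{lem_comp} yields $u\ge v$ a.e. in $\Omega$.

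Next I would run the same argument with the roles of $u$ and $v$ interchanged: now $v$ is a $\mathcal Q$-supersolution, $u$ is a $\mathcal Q$-subsolution, and $\min(v-u,0)\in W^{1,p}_0(\Omega)$ because $v-u=-(u-v)\in W^{1,p}_0(\Omega)$ and $W^{1,p}_0(\Omega)$ is a vector space (again using the Lipschitz-composition fact). Lemma~\ref{lem_comp} then gives $v\ge u$ a.e. in $\Omega$. Combining the two inequalities produces $u=v$ a.e. in $\Omega$, which is the claim.

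The proof is essentially immediate given Lemma~\ref{lem_comp}, so there is no real obstacle; the only point that requires a word of care is the verification that $\min(u-v,0)$ (and $\min(v-u,0)$) lies in $W^{1,p}_0(\Omega)$, which is where the hypothesis $u-v\in W^{1,p}_0(\Omega)$ is used together with the fact that truncation is a bounded operation on $W^{1,p}_0$. A clean way to write this is to note $\min(u-v,0)$ and $\max(u-v,0)=(u-v)-\min(u-v,0)$ are both in $W^{1,p}(\Omega)$ with the stated boundary behaviour, so no subtlety about the boundary trace arises. Everything else is just quoting Lemma~\ref{lem_comp} twice.
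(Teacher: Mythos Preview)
Your proposal is correct and is exactly the argument the paper intends: the corollary is stated there without proof, merely as an immediate consequence of Lemma~\ref{lem_comp}, and your two applications of the comparison principle with the roles of $u$ and $v$ swapped is the standard way to unpack that. Your added remark about why $\min(u-v,0)\in W^{1,p}_0(\Omega)$ via Lipschitz truncation is the right justification and is more detailed than what the paper itself supplies.
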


We will use extensively various estimates obtained in \cite{HoVa} (and 
originated in the unpublished licentiate thesis \cite{Va_lic}). Therefore
for readers' convenience we use basically the same notation
as in \cite{HoVa}. Thus we let $M$ be a Cartan-Hadamard manifold, $M(\infty)$
the sphere at infinity, and $\bar M=M\cup M(\infty)$. Recall that the sphere
at infinity is defined as the set of all equivalence classes of unit speed
geodesic rays in $M$; two such rays $\gamma_{1}$ and $\gamma_{2}$ are
equivalent if $\sup_{t\ge0}d\bigl(\gamma_{1}(t),\gamma_{2}(t)\bigr)< \infty$.
For each $x\in M$ and $y\in\bar M\setminus\{x\}$ there exists a unique unit
speed geodesic $\gamma^{x,y}\colon\mathbb{R}\to M$ such that $\gamma^{x,y}%
_{0}=x$ and $\gamma^{x,y}_{t}=y$ for some $t\in(0,\infty]$. If $v\in
T_{x}M\setminus\{0\}$, $\alpha>0$, and $r>0$, we define a cone
\[
C(v,\alpha)=\{y\in\bar M\setminus\{x\}:\sphericalangle(v,\dot\gamma^{x,y}%
_{0})<\alpha\}
\]
and a truncated cone
\[
T(v,\alpha,r)=C(v,\alpha)\setminus\bar B(x,r),
\]
where $\sphericalangle(v,\dot\gamma^{x,y}_{0})$ is the angle between vectors
$v$ and $\dot\gamma^{x,y}_{0}$ in $T_{x} M$. All cones and open balls in $M$
form a basis for the cone topology on $\bar M$.

Throughout the paper we assume that sectional curvatures of $M$ are bounded
both from above and below by
\begin{equation}
\label{curv_assump}-(b\circ\rho)^{2}(x)\le\sect_{x}(P)\le-(a\circ\rho
)^{2}(x),\ \rho(x)=d(x,o),
\end{equation}
for all $x\in M$ and all $2$-dimensional subspaces $P\subset T_{x}M$. Here $a$
and $b$ are smooth functions $[0,\infty)\to[0,\infty)$ that are constant in
some neighborhood of $0$ and $b\ge a$. Furthermore, we assume that $b$ is
monotonic and that there exist constants $T_{1},C_{1},C_{2},C_{3}>0$, and
$Q\in(0,1)$ such that
\begin{align}
\label{A1}a(t)%
\begin{cases}
=C_{1}t^{-1} & \text{if $b$ is decreasing,}\\
\ge C_{1}t^{-1} & \text{if $b$ is increasing}
\end{cases}
\tag{A1}%
\end{align}
for all $t\ge T_{1}$ and
\begin{align}
\label{A5}a(t)  &  \le C_{2},\tag{A2}\\
b(t+1)  &  \le C_{2}b(t),\tag{A3}\\
b(t/2)  &  \le C_{2}b(t),\tag{A4}\\
b(t)  &  \ge C_{3}(1+t)^{-Q} \tag{A5}%
\end{align}
for all $t\ge0$. In addition, we assume that
\begin{align}
\label{A6} &  \lim_{t\to\infty}\frac{b^{\prime}(t)}{b(t)^{2}}=0 \tag{A6}%
\end{align}
and that there exists a constant $C_{4}>0$ such that
\begin{align}
\label{A7} &  \lim_{t\to\infty}\frac{t^{1+C_{4}}b(t)}{f_{a}^{\prime}(t)}=0.
\tag{A7}%
\end{align}

The curvature bounds are needed to control first and second order derivatives
of certain ''barrier" functions that will be constructed in the next section.
To this end, if $k\colon[0,\infty)\to[0,\infty)$ is a smooth function, we
denote by $f_{k}\in C^{\infty}\bigl([0,\infty)\bigr)$ the solution to the
initial value problem
\begin{equation}
\label{Jacobi_eq}\left\{
\begin{aligned} f_k(0)&=0, \\ f_k'(0)&=1, \\ f_k''&=k^2f_k. \end{aligned} \right.
\end{equation}
It follows that the solution $f_{k}$ is a non-negative smooth function.

We close this section with two examples where we verify that the curvature
bounds that appear in Theorem~\ref{thm1} and Corollary~\ref{HVkor2_RT} satisfy
the assumption \eqref{A1}-\eqref{A7}.

\begin{example}
\label{ex1} As a first example we consider the curvature bounds in
Theorem~\ref{thm1}. Write $C_{1}=\sqrt{\phi(\phi-1)}$. We may assume that
$\varepsilon<2\phi-2$. For $t\ge R_{0}$ let
\[
a(t)=\frac{C_{1}}{t}
\]
and
\[
b(t)=t^{\phi-2-\varepsilon/2}
\]
and extend them to smooth functions $a\colon[0,\infty)$ and $b\colon
[0,\infty)\to(0,\infty)$ such that they are constants in some
neighborhood of $0$, $b$ is monotonic and $b\ge a$. This is possible since
\[
C_{1}t^{-1}\le t^{\phi-2-\varepsilon/2}
\]
for $t\ge R_{0}$ by the curvature assumption \eqref{curv_ass_minim}. It is
easy to verify that then
\[
f_{a}(t)=c_{1}t^{\phi_{1}}+c_{2}t^{1-\phi_{1}}
\]
for all $t\ge R_{0}$, where
\[
\phi_{1}=\frac{1+\sqrt{1+4C_{1}^{2}}}{2}>1,
\]
\[
c_{1}=R_{0}^{-\phi_{1}}\frac{f_{a}(R_{0})(\phi_{1}-1)+R_{0}f_{a}^{\prime
}(R_{0})}{2\phi_{1}-1}>0,
\]
and
\[
c_{2}=R_{0}^{\phi_{1}-1}\frac{f_{a}(R_{0})\phi_{1}-R_{0}f_{a}^{\prime}(R_{0}%
)}{2\phi_{1}-1}.
\]
We then have
\[
\lim_{t\to\infty}\frac{tf_{a}^{\prime}(t)}{f_{a}(t)}=\phi_{1}
\]
and, for all $C_{4}\in(0,\varepsilon/2)$
\[
\lim_{t\to\infty}\frac{t^{1+C_{4}}b(t)}{f_{a}^{\prime}(t)}=0.
\]
It follows that $a$ and $b$ satisfy \eqref{A1}-\eqref{A7} with constants
$T_{1}=R_{0}$, $C_{1}$, some $C_{2}>0$, some $C_{3}>0$, $Q=\max\{1/2,-\phi
+2+\varepsilon/2\}$, and any $C_{4}\in(0,\varepsilon/2)$.
\end{example}

\begin{example}
\label{ex2} Let $k>0$ and $\varepsilon>0$ be constants and define $a(t)=k$ for
all $t\ge0$. Define
\[
b(t)=t^{-1-\varepsilon/2}e^{kt}
\]
for $t\ge R_{0}=r_{0}+1$, where $r_{0}>0$ is so large that $t\mapsto
t^{-1-\varepsilon/2}e^{kt}$ is increasing and greater than $k$ for all $t\ge
r_{0}$. Extend $b$ to an increasing smooth function $b\colon[0,\infty
)\to[k,\infty)$ that is constant in some neighborhood of $0$. Now we can
choose $C_{1}>0$ in \eqref{A1} as large as we wish. In particular, once the
operator ${\mathcal{A}}$ and hence the constant $B_{0}$ is chosen, we may fix
$C_{1}$ so large that
\[
\phi_{1}=\frac{1+\sqrt{1+4C_{1}^{2}}}{2}
\]
satisfies $B_{0}<\tfrac12 ((n-1)\phi_{1}-1)$. Then $a$ and $b$ satisfy
\eqref{A1}-\eqref{A7} with constants $C_{1},\ T_{1}=C_{1}/k$, some $C_{2}>0$,
some $C_{3}>0$, $Q=1/2$, and any $C_{4}\in(0,\varepsilon/2)$.
\end{example}


\section{Construction of a barrier}

\label{sec_barrier}
To solve the asymptotic Dirichlet problem for ${\mathcal{Q}}$ with given
continuous boundary data $f\in C\bigl(M(\infty)\bigr)$, the first task is to
construct a ''barrier" for each boundary point $x_{0}\in M(\infty)$. For that
purpose let $v_{0}=\dot\gamma^{o,x_{0}}_{0}$ be the initial (unit) vector of
the geodesic ray $\gamma^{o,x_{0}}$ from a fixed point $o\in M$ and define a
function $h:M(\infty)\to\mathbb{R}$,
\begin{equation}
\label{eq:hoodef}h(x)=\min\bigl(1,L\sphericalangle(v_{0},\dot\gamma^{o,x}%
_{0})\bigr),
\end{equation}
where $L\in(8/\pi,\infty)$ is a constant.

Next step is to extend $h$ to a function $h\in C^{\infty}(M)\cap C(\bar M)$
with controlled first and second order derivatives. This is done in
\cite{HoVa} by defining first a crude extension $\tilde h:\bar M\to\mathbb{R}%
$,
\begin{equation}
\label{eq:hoodeftilde}\tilde h(x)=\min\Bigl(1,\max\bigl(2-2\rho
(x),L\sphericalangle(v_{0},\dot\gamma^{o,x}_{0})\bigr)\Bigr).
\end{equation}
Then $\tilde h\in C(\bar M)$ and $\tilde h|M(\infty)=h$. As the final step in
the construction of a barrier we smooth out $\tilde{h}$ to get an extension
$h\in C^{\infty}(M)\cap C(\bar M)$. To this end, we fix $\chi\in C^{\infty
}(\mathbb{R})$ such that $0\le\chi\le1$, $\spt\chi\subset[-2,2]$, and
$\chi\vert[-1,1]\equiv1$. Then for any function $\varphi\in C(M)$ we define
functions $F_{\varphi}\colon M\times M\to\mathbb{R},\ {\mathcal{R}}%
(\varphi)\colon M\to M$, and ${\mathcal{P}}(\varphi)\colon M\to\mathbb{R}$ by
\begin{align*}
F_{\varphi}(x,y)  &  =\chi\bigl(b(\rho(y))d(x,y)\bigr)\varphi(y),\\
{\mathcal{R}}(\varphi)(x)  &  =\int_{M}F_{\varphi}(x,y) dm(y),\ \text{ and}\\
{\mathcal{P}}(\varphi)  &  =\frac{{\mathcal{R}}(\varphi)}{{\mathcal{R}}(1)},
\end{align*}
where
\[
{\mathcal{R}}(1)=\int_{M}\chi\bigl(b(\rho(y))d(x,y)\bigr)dm(y)>0.
\]
If $\varphi\in C(\bar M)$, we extend ${\mathcal{P}}(\varphi)\colon
M\to\mathbb{R}$ to a function $\bar{M}\to\mathbb{R}$ by setting ${\mathcal{P}%
}(\varphi)(x)=\varphi(x)$ whenever $x\in M(\infty)$. Then the extended
function ${\mathcal{P}}(\varphi)$ is $C^{\infty}$-smooth in $M$ and continuous
in $\bar{M}$; see \cite[Lemma 3.13]{HoVa}. In particular, applying
${\mathcal{P}}$ to the function $\tilde{h}$ yields an appropriate smooth
extension
\begin{equation}
\label{extend_h}h:={\mathcal{P}}(\tilde{h})
\end{equation}
of the original function $h\in C\bigl(M(\infty)\bigr)$ that was defined in \eqref{eq:hoodef}.

We obtain control on first and second order derivatives of the extended
function $h$ from the curvature assumption \eqref{curv_assump} by the Rauch
and Hessian comparison theorems. Here the solutions $f_{a}$ and $f_{b}$ to the
initial value problem \eqref{Jacobi_eq}, where $a$ and $b$ are curvature
bounds in \eqref{curv_assump} satisfying \eqref{A1}-\eqref{A7}, play an important
role. Another crucial point is that the mollifying procedure above depends on the curvature lower bound
function $b$.
For the next lemma and later purposes we denote
\[
\Omega=C(v_{0},1/L)\cap M \ \text{ and }\ k\Omega=C(v_{0},k/L)\cap M
\]
for $k>0$. We collect various constants and functions together to a data
\[
C=(a,b,T_{1},C_{1},C_{2},C_{3},C_{4},Q,n,L).
\]
Furthermore, we denote by $\|\Hess_{x} u\|$ the norm of the Hessian of a
smooth function $u$ at $x$, that is
\[
\|\Hess_{x} u\|=\sup_{ \overset{ \mbox{\scriptsize$X\in T_xM$} }{\lvert X
\rvert\le1}}\lvert\Hess u(X,X) \rvert.
\]
Our (first) main estimates are the following.

\begin{Lem}
\cite[Lemma 3.16]{HoVa}\label{arvio_lause} There exist constants $R_{1}%
=R_{1}(C)$ and $c_{5}=c_{5}(C)$ such that the extended function $h\in
C^{\infty}(M)\cap C(\bar M)$ in \eqref{extend_h} satisfies
\begin{equation}
\label{arvio1}%
\begin{split}
|\nabla h(x)|  &  \le c_{5}\frac{1}{(f_{a}\circ\rho)(x)},\\
\|\Hess_{x} h\|  &  \le c_{5}\frac{(b\circ\rho)(x)}{(f_{a}\circ\rho)(x)},\\
\end{split}
\end{equation}
for all $x\in3\Omega\setminus B(o,R_{1})$. In addition,
\[
h(x)=1
\]
for every $x\in M\setminus\bigl(2\Omega\cup B(o,R_{1})\bigr)$.
\end{Lem}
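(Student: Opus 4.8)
The plan is to quote the corresponding estimate from \cite{HoVa} and explain why the same proof applies verbatim in the present setting, since the function $h$ in \eqref{extend_h} is constructed by exactly the same mollification procedure ${\mathcal P}$ applied to the same crude extension $\tilde h$ from \eqref{eq:hoodeftilde}, and the bounds in \cite[Lemma~3.16]{HoVa} depend only on the curvature data $C=(a,b,T_1,C_1,C_2,C_3,C_4,Q,n,L)$, not on the operator ${\mathcal Q}$. Concretely, I would proceed as follows. First I would recall that away from the cone $2\Omega$ and the ball $B(o,R_1)$ the crude extension $\tilde h$ is identically $1$ once $R_1$ is large enough, because $2-2\rho(x)<0$ there and $L\sphericalangle(v_0,\dot\gamma^{o,x}_0)\ge 1$ outside $2\Omega=C(v_0,2/L)\cap M$; since the mollifier $F_\varphi(x,y)=\chi(b(\rho(y))d(x,y))\varphi(y)$ is supported, in the $y$-variable, in the set where $d(x,y)\le 2/b(\rho(y))$, assumptions \eqref{A3}--\eqref{A4} guarantee this support stays inside the region where $\tilde h\equiv 1$ for $x\in M\setminus(2\Omega\cup B(o,R_1))$, whence ${\mathcal P}(\tilde h)(x)=1$ there. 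This gives the last assertion of the lemma.

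Next, for the gradient and Hessian bounds on $3\Omega\setminus B(o,R_1)$, I would differentiate ${\mathcal P}(\tilde h)={\mathcal R}(\tilde h)/{\mathcal R}(1)$ under the integral sign. Each derivative that falls on the cutoff factor $\chi(b(\rho(y))d(x,y))$ produces a factor of $b(\rho(y))$ times a derivative of $d(\cdot,y)$; the first derivatives of $x\mapsto d(x,y)$ are bounded by $1$ and the Hessian of $d(\cdot,y)$ is controlled, via the Hessian comparison theorem and the upper curvature bound $\sect\le -(a\circ\rho)^2$, by $(f_a'\circ\rho)/(f_a\circ\rho)$ along the relevant directions. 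On the support of $F_{\tilde h}(x,\cdot)$ one has $d(x,y)\lesssim 1/b(\rho(y))$, so by \eqref{A3}--\eqref{A4} the points $x$ and $y$ satisfy $\rho(x)\approx\rho(y)$ up to additive and multiplicative constants; this lets one replace $b(\rho(y))$ by $b(\rho(x))$ and $f_a(\rho(y))$ by $f_a(\rho(x))$ at the cost of the constants $C_2$. Meanwhile the Lipschitz constant of $\tilde h$ itself is $O\bigl(1/(f_a\circ\rho)\bigr)$ on $2\Omega$ by the Rauch comparison theorem applied to the angular function $\sphericalangle(v_0,\dot\gamma^{o,\cdot}_0)$, together with the elementary bound $\lvert\nabla(2-2\rho)\rvert=2$. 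Combining these, together with the normalization ${\mathcal R}(1)\approx b(\rho(x))^{-n}\operatorname{vol}$ of a small ball and the volume comparison, yields $\lvert\nabla h(x)\rvert\le c_5/(f_a\circ\rho)(x)$ and $\|\Hess_x h\|\le c_5(b\circ\rho)(x)/(f_a\circ\rho)(x)$.

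The one point that needs care, and which I expect to be the main obstacle, is the bookkeeping that transfers the estimates from the variable point $y$ in the support of the mollifier to the base point $x$: one must check that the ``radius'' $1/b(\rho(y))$ of the mollifying kernel is comparable to $1/b(\rho(x))$ and that $f_a(\rho(y))\approx f_a(\rho(x))$ uniformly, which is exactly where \eqref{A3}, \eqref{A4}, \eqref{A5}, and \eqref{A6} enter; \eqref{A6} in particular ensures that $b$ does not oscillate too wildly relative to its own square, so that $d(x,y)\le 2/b(\rho(y))$ forces $\lvert\rho(x)-\rho(y)\rvert$ to be small on the scale on which $b$ and $f_a$ vary. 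Since all of this is carried out in detail in \cite{Va_lic} and \cite{HoVa} and uses only the curvature hypotheses \eqref{curv_assump} and \eqref{A1}--\eqref{A7}, which we assume throughout, the statement follows. We therefore refer to \cite[Lemma~3.16]{HoVa} for the complete argument.
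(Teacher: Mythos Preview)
Your proposal is correct and matches the paper's approach: the paper does not prove this lemma at all but simply imports it verbatim as \cite[Lemma~3.16]{HoVa}, since the construction of $h={\mathcal P}(\tilde h)$ and the curvature hypotheses \eqref{A1}--\eqref{A7} are identical to those in \cite{HoVa}. Your sketch of the argument (mollifier support, Rauch and Hessian comparison, the $y\to x$ bookkeeping via (A3)--(A6)) is accurate and in fact more informative than what the paper provides, but strictly speaking all that is needed here is the citation.
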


Let then $A>0$ be a fixed constant. We aim to show that
\begin{equation}
\label{varphi_def}\varphi=A(R_{4}^{\delta}\rho^{-\delta}+h)
\end{equation}
is a ${\mathcal{Q}}$-supersolution in the set $3\Omega\setminus\bar{B}%
(o,R_{4})$, where $\delta>0$ and $R_{4}>0$ are constants that will be
specified later and $h$ is the extended function defined in \eqref{extend_h}.
First of all $\varphi$ is $C^{\infty}$-smooth in $M\setminus\{o\}$ and there
\[
\nabla\varphi=A\bigl(-R_{4}^{\delta}\delta\rho^{-\delta-1}\nabla\rho+\nabla
h\bigr)
\]
and
\[
\Delta\varphi=A\bigl(R_{4}^{\delta}\delta(\delta+1)\rho^{-\delta-2}%
-R_{4}^{\delta}\delta\rho^{-\delta-1}\Delta\rho+\Delta h\bigr).
\]
We shall make use of the following estimates obtained in \cite{HoVa}; see also \cite{Ho}:

\begin{Lem}
\cite[Lemma 3.17]{HoVa}\label{perusta} There exist constants $R_{2}=R_{2}(C)$
and $c_{6}=c_{6}(C)$ with the following property. If $\delta\in(0,1)$, then
\[%
\begin{split}
|\nabla h|  &  \le c_{6}/(f_{a}\circ\rho),\\
\|\Hess h\|  &  \le c_{6}\rho^{-C_{4}-1}(f_{a}^{\prime}\circ\rho)/(f_{a}%
\circ\rho),\\
|\nabla\langle\nabla h,\nabla h\rangle|  &  \le c_{6}\rho^{-C_{4}-2}%
(f_{a}^{\prime}\circ\rho)/(f_{a}\circ\rho),\\
|\nabla\langle\nabla h,\nabla(\rho^{-\delta})\rangle|  &  \le c_{6}%
\rho^{-C_{4}-2}(f_{a}^{\prime}\circ\rho)/(f_{a}\circ\rho),\\
\nabla\bigl\langle\nabla(\rho^{-\delta}),\nabla(\rho^{-\delta})\bigr\rangle
&  =-2\delta^{2}(\delta+1)\rho^{-2\delta-3}\nabla\rho
\end{split}
\]
in the set $3\Omega\setminus B(o,R_{2})$.
\end{Lem}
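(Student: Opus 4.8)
The plan is to derive each of the estimates in the lemma as a refinement of Lemma~\ref{arvio_lause}, using only the structural hypotheses \eqref{A1}--\eqref{A7}, the Hessian comparison theorem, and elementary Riemannian calculus; the radius $R_{2}$ and the constant $c_{6}$ are then taken to be the largest of the finitely many ones produced along the way, none of which will depend on $\delta\in(0,1)$. Fix $\delta\in(0,1)$ and take $R_{2}\ge R_{1}$, $c_{6}\ge c_{5}$. The bound $|\nabla h|\le c_{6}/(f_{a}\circ\rho)$ is just the first estimate of Lemma~\ref{arvio_lause}. For the Hessian bound, the idea is to trade the factor $b\circ\rho$ of Lemma~\ref{arvio_lause} for $\rho^{-C_{4}-1}(f_{a}'\circ\rho)$ by means of \eqref{A7}: since $t^{1+C_{4}}b(t)/f_{a}'(t)\to0$, after enlarging $R_{2}$ we have $t^{1+C_{4}}b(t)\le f_{a}'(t)$ for $t\ge R_{2}$, hence $(b\circ\rho)(x)\le\rho(x)^{-C_{4}-1}(f_{a}'\circ\rho)(x)$ on $3\Omega\setminus B(o,R_{2})$, and the stated bound for $\|\Hess h\|$ follows at once. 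Two elementary facts will be used repeatedly: $f_{a}$ is convex with $f_{a}(0)=0$, $f_{a}'(0)=1$, so that $f_{a}(t)\ge t$; and, combining the lower bound $b(t)\ge C_{3}(1+t)^{-Q}$ of \eqref{A5} (recall $Q<1$) with \eqref{A7}, one gets $f_{a}'(t)\ge t^{C_{4}}$ for all large $t$, so that after one more enlargement of $R_{2}$ one has $\rho^{-2}/(f_{a}\circ\rho)\le\rho^{-C_{4}-2}(f_{a}'\circ\rho)/(f_{a}\circ\rho)$ on $3\Omega\setminus B(o,R_{2})$.

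The three estimates involving gradients of inner products all rest on the product rule
\[
\bigl\langle\nabla\langle\nabla u,\nabla v\rangle,X\bigr\rangle=\Hess u(X,\nabla v)+\Hess v(X,\nabla u),
\]
which gives $|\nabla\langle\nabla u,\nabla v\rangle|\le\|\Hess u\|\,|\nabla v|+\|\Hess v\|\,|\nabla u|$. Taking $u=v=h$ and inserting the two bounds above produces a factor $(f_{a}\circ\rho)^{-2}$; absorbing one copy of it into $\rho^{-1}$ via $f_{a}(t)\ge t$ yields the estimate for $|\nabla\langle\nabla h,\nabla h\rangle|$. For the mixed term, one computes $\nabla(\rho^{-\delta})=-\delta\rho^{-\delta-1}\nabla\rho$ and
\[
\Hess(\rho^{-\delta})=-\delta\rho^{-\delta-1}\Hess\rho+\delta(\delta+1)\rho^{-\delta-2}\,d\rho\otimes d\rho ,
\]
so that, by the Hessian comparison theorem, $\|\Hess(\rho^{-\delta})\|\le\delta\rho^{-\delta-1}(f_{b}'/f_{b})(\rho)+\delta(\delta+1)\rho^{-\delta-2}$; since $\delta\in(0,1)$ and $\rho\ge R_{2}\ge1$, the powers of $\rho$ here are at most $\rho^{-1}$ and $\rho^{-2}$ with absolute constants. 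Feeding this together with $|\nabla h|\le c_{5}/(f_{a}\circ\rho)$ and the bound on $\|\Hess h\|$ into the product-rule inequality with $u=h$, $v=\rho^{-\delta}$, then reducing $(f_{b}'/f_{b})\circ\rho$ to a multiple of $b\circ\rho$ (see the next paragraph) and applying \eqref{A7}, and finally using $f_{a}'(t)\ge t^{C_{4}}$ on the remaining lower-order term, gives the bound for $|\nabla\langle\nabla h,\nabla(\rho^{-\delta})\rangle|$. The last assertion is an exact identity needing no curvature input: since $|\nabla\rho|\equiv1$ we have $\langle\nabla(\rho^{-\delta}),\nabla(\rho^{-\delta})\rangle=\delta^{2}\rho^{-2\delta-2}$, whose gradient is $-2\delta^{2}(\delta+1)\rho^{-2\delta-3}\nabla\rho$. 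On the part of $3\Omega$ lying outside $2\Omega$ the function $h$ is constant by Lemma~\ref{arvio_lause}, so every $h$-term vanishes there and nothing is to be checked; on $2\Omega\setminus B(o,R_{2})$ the estimates of Lemma~\ref{arvio_lause} apply and the above goes through.

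The one genuinely delicate step is the reduction, used above, of the Hessian comparison bound $\|\Hess\rho\|\le(f_{b}'/f_{b})(\rho)$ to one of the form $\|\Hess\rho\|\le C\,(b\circ\rho)$ for large $\rho$ — that is, controlling how fast the lower-curvature Jacobi function $f_{b}$ grows. This is exactly where the remaining technical hypotheses \eqref{A3}, \eqref{A4} and \eqref{A6} enter, through the Riccati equation $(f_{b}'/f_{b})'=b^{2}-(f_{b}'/f_{b})^{2}$. Everything else is routine bookkeeping with the product rule and with comparisons between $f_{a}$, $f_{a}'$ and powers of $\rho$, all insensitive to $\delta\in(0,1)$, so that $R_{2}$ and $c_{6}$ may indeed be chosen independently of $\delta$.
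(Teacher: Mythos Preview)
The paper does not give its own proof of this lemma; it is quoted verbatim from \cite[Lemma~3.17]{HoVa}, so there is no in-paper argument to compare your proposal against.

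Your outline is sound and correctly identifies the ingredients. The first two inequalities are immediate from Lemma~\ref{arvio_lause} together with \eqref{A7}; the third and fourth follow from the product rule
\[
\bigl\langle\nabla\langle\nabla u,\nabla v\rangle,X\bigr\rangle=\Hess u(X,\nabla v)+\Hess v(X,\nabla u),
\]
the Hessian comparison $\|\Hess\rho\|\le(f_b'/f_b)\circ\rho$, and the auxiliary facts $f_a(t)\ge t$ (convexity) and $f_a'(t)\ge c\,t^{C_4}$ for large $t$ (your derivation of this from \eqref{A5} and \eqref{A7} is correct, since $Q<1$ gives $f_a'(t)\gtrsim t^{1+C_4-Q}\ge t^{C_4}$); the fifth line is a straightforward identity. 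Your handling of the $\delta$-dependence is also right: the bounds $\delta\rho^{-\delta}\le 1$ and $\delta(\delta+1)\le 2$ on $\rho\ge 1$ absorb all $\delta$-factors into absolute constants, so $R_2$ and $c_6$ are genuinely independent of $\delta\in(0,1)$.

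The one point you leave as a sketch --- the reduction $(f_b'/f_b)(t)\le C\,b(t)$ for large $t$ --- is exactly the substantive analytic content that goes beyond Lemma~\ref{arvio_lause}, and you are right that it is where \eqref{A6} (together with the monotonicity and doubling hypotheses on $b$) enters via the Riccati equation $(f_b'/f_b)'=b^2-(f_b'/f_b)^2$. In \cite{HoVa} this is established separately among their preliminary Jacobi-field comparison estimates. As written, your proposal is therefore a correct reduction of the lemma to this single ODE fact, but is not a self-contained proof until that step is actually carried out.
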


As in \cite{HoVa} we denote
\[
\phi_{1}=\frac{1+\sqrt{1+4C_{1}^{2}}}{2}>1,\quad\text{and}\quad\delta_{1}%
=\min\left\lbrace C_{4},\frac{-1+(n-1)\phi_{1}}{1+(n-1)\phi_{1}}\right\rbrace
\in(0,1),
\]
where $C_{1}$ and $C_{4}$ are constants from \eqref{A1} and \eqref{A7},
respectively. Then by \cite[Lemma 3.18]{HoVa} there exists $R_{3}%
=R_{3}(C,\delta)$ such that
\begin{equation}
\label{2nd_estim}-\Delta(\rho^{-\delta})>0 \quad\text{and}\quad\frac
{\lvert\Delta h \rvert}{-\Delta(\rho^{-\delta})}\le\delta
\end{equation}
in $3\Omega\setminus B(o,R_{3})$.

Suppose then that
\[
B_{0}<\tfrac12\bigl((n-1)\phi_{1} -1\bigr)
\]
and let $0<\delta<\min(\delta_{1},\phi_{1}-1,C_{4}/2)$ be so small that
\begin{equation}
\label{delta_bound}\delta+ \frac{2\lambda\bigl(\max(0,B_{0})+\bar{B_{0}}%
\delta\bigr)}{(1-\lambda)(1-\delta)^{3}} <1,
\end{equation}
where $B_{0}$ is the constant in \eqref{Bgrowth}, $\bar{B_{0}}=\max
(\tfrac12,B_{0})$, and
\[
\lambda=\frac{1+\delta}{(1-\delta)(n-1)\phi_{1}}\in(0,1).
\]
Such $\delta$ exists because $B_{0}<\tfrac12((n-1)\phi_{1} -1)$. Then there
exists $R_{4}=R_{4}(C,B_{0})\ge\min(R_{3},1)$ such that, in addition to
estimates in \eqref{2nd_estim}, we have
\begin{equation}
\label{(3.30)}\frac{-\Delta(\rho^{-\delta})}{\delta\rho^{-\delta-1}\Delta\rho
}\ge1-\lambda, \quad\frac{\lvert\nabla h \rvert}{\lvert\nabla(\rho^{-\delta})
\rvert}\le\delta,\quad\frac{\rho(f_{a}^{\prime}\circ\rho)}{f_{a}\circ\rho}%
\ge(1-\delta)\phi_{1},
\end{equation}
and
\begin{equation}
\label{(3.33)}\frac{3\bar{B_{0}}c_{6}\rho^{-C_{4}+2\delta}}{R_{4}^{\delta
}(1-\delta/R_{4}^{\delta})^{2}\delta^{2}(1-\lambda)(n-1)} \le\delta
\end{equation}
in $3\Omega\setminus B(o,R_{4})$; see \cite[Lemma 3.18, (3.30), (3.32)]{HoVa}
for the estimates in \eqref{(3.30)}. The estimate \eqref{(3.33)} is possible
because $-C_{4}+2\delta<0$.

We are now in a position to prove that $\varphi=A(R_{4}^{\delta}\rho^{-\delta
}+h)$, for any given constant $A>0,$ is a ${\mathcal{Q}}$-supersolution in an
open truncated cone $3\Omega\setminus\bar{B}(o,R_{4})$ whenever $B_{0}%
<\tfrac12((n-1)\phi_{1}-1)$. As a smooth function $\varphi$ is a
${\mathcal{Q}}$-supersolution if $\diver{\mathcal{A}}(\lvert\nabla
\varphi\rvert^{2})\nabla\varphi\le0$. On the other hand, it follows from
\eqref{2nd_estim} and \eqref{(3.30)} that
\[
\lvert\nabla\varphi\rvert>0\quad\text{and}\quad\Delta\varphi<0
\]
in $3\Omega\setminus B(o,R_{4})$. Hence we can write
\begin{align*}
\diver{\mathcal A}(\lvert\nabla\varphi\rvert^{2})\nabla\varphi &
={\mathcal{A}}(\lvert\nabla\varphi\rvert)^{2}\Delta\varphi+ {\mathcal{A}%
}^{\prime2})\left\langle \nabla\langle\nabla\varphi,\nabla\varphi\rangle,
\nabla\varphi\right\rangle \\
&  ={\mathcal{A}}(\lvert\nabla\varphi\rvert)^{2}\left\lbrace \Delta\varphi+
{\mathcal{B}}(\lvert\nabla\varphi\rvert^{2}) \left\langle \nabla\langle
\nabla\varphi,\nabla\varphi\rangle, \nabla\varphi\right\rangle \right\rbrace
\\
\end{align*}
in $3\Omega\setminus B(o,R_{4})$. Therefore $\varphi$ is a ${\mathcal{Q}}%
$-supersolution if
\begin{equation}
\label{Qsuper_cond}\frac{{\mathcal{B}}(\lvert\nabla\varphi\rvert^{2}%
)\lvert\nabla\varphi\rvert^{2}\left\langle \nabla\langle\nabla\varphi
,\nabla\varphi\rangle, \nabla\varphi\right\rangle }{-\lvert\nabla\varphi
\rvert^{2}\Delta\varphi} < 1
\end{equation}
in $3\Omega\setminus\bar{B}(o,R_{4})$.

\begin{Lem}
\label{supsol} Let $A>0$ be a fixed constant and let $h$ be the function
defined in \eqref{extend_h}. Then there exist constants $\delta=\delta
(C,B_{0})\in(0,\delta_{1})$ and $R_{4}=R_{4}(C,B_{0})$ such that the function
$\varphi=A(R_{4}^{\delta}\rho^{-\delta}+h)$ is a ${\mathcal{Q}}$-supersolution
in the set $3\Omega\setminus\bar{B}(o,R_{4})$ whenever $B_{0}<\tfrac
12((n-1)\phi_{1}-1)$.
\end{Lem}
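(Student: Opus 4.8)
The reduction carried out just before the statement already shows that, in $3\Omega\setminus B(o,R_4)$, the function $\varphi$ is $C^\infty$-smooth with $|\nabla\varphi|>0$ and $\Delta\varphi<0$, and that $\varphi$ is a ${\mathcal Q}$-supersolution as soon as \eqref{Qsuper_cond} holds there. So the plan is: first fix $\delta=\delta(C,B_0)$ with $0<\delta<\min(\delta_1,\phi_1-1,C_4/2)$ satisfying \eqref{delta_bound} (possible since $B_0<\tfrac12((n-1)\phi_1-1)$), and then choose $R_4=R_4(C,B_0)$ large enough that, besides \eqref{2nd_estim}, \eqref{(3.30)} and \eqref{(3.33)}, one has $R_4\ge 1$ and, say, $\rho(f_a'\circ\rho)/(f_a\circ\rho)\le 2\phi_1$ in $3\Omega\setminus B(o,R_4)$ (legitimate because $tf_a'(t)/f_a(t)\to\phi_1$). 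Writing $\varphi=Ag$ with $g=R_4^\delta\rho^{-\delta}+h$, the powers of $A$ cancel in \eqref{Qsuper_cond} except in the argument of ${\mathcal B}$, where they do no harm since ${\mathcal B}(t)t\in(-\tfrac12,B_0]$ for every $t>0$ by \eqref{Bgrowth}; thus it suffices to bound
\[
\frac{{\mathcal B}(A^2|\nabla g|^2)\,A^2|\nabla g|^2\,\langle\nabla\langle\nabla g,\nabla g\rangle,\nabla g\rangle}{-|\nabla g|^2\,\Delta g}<1
\]
in $3\Omega\setminus\bar B(o,R_4)$.

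For the numerator I would expand $\langle\nabla\langle\nabla g,\nabla g\rangle,\nabla g\rangle=T+E$, where
\[
T:=R_4^{3\delta}\bigl\langle\nabla\langle\nabla(\rho^{-\delta}),\nabla(\rho^{-\delta})\rangle,\nabla(\rho^{-\delta})\bigr\rangle=2\delta^3(\delta+1)R_4^{3\delta}\rho^{-3\delta-4}>0
\]
by the last identity in Lemma~\ref{perusta}, and $E$ is the sum of the five remaining terms, each containing at least one factor of $h$. The term pairing $\nabla\langle\nabla(\rho^{-\delta}),\nabla(\rho^{-\delta})\rangle$ against $\nabla h$ is $\le\delta T$ after using $|\nabla h|\le\delta|\nabla(\rho^{-\delta})|$ from \eqref{(3.30)} and $R_4^\delta\ge 1$; the four terms containing $\nabla\langle\nabla(\rho^{-\delta}),\nabla h\rangle$ or $\nabla\langle\nabla h,\nabla h\rangle$ are bounded, via the corresponding estimates of Lemma~\ref{perusta} together with $|\nabla(\rho^{-\delta})|=\delta\rho^{-\delta-1}$ and $|\nabla h|\le\delta^2\rho^{-\delta-1}$, by $c\delta R_4^{2\delta}\rho^{-C_4-\delta-3}(f_a'\circ\rho)/(f_a\circ\rho)$ for some $c=c(C)$. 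Hence $|E|\le\delta T+c\delta R_4^{2\delta}\rho^{-C_4-\delta-3}(f_a'\circ\rho)/(f_a\circ\rho)$.

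For the denominator I would use $|\nabla g|^2\ge R_4^{2\delta}(1-\delta/R_4^\delta)^2\delta^2\rho^{-2\delta-2}$ and $-\Delta g\ge R_4^\delta(1-\delta/R_4^\delta)\bigl(-\Delta(\rho^{-\delta})\bigr)$, both immediate from \eqref{2nd_estim} and \eqref{(3.30)}, and then estimate $-\Delta(\rho^{-\delta})\ge(1-\lambda)\delta\rho^{-\delta-1}\Delta\rho\ge(1-\lambda)(1-\delta)(n-1)\phi_1\,\delta\rho^{-\delta-2}$, invoking \eqref{(3.30)} and the Laplacian comparison $\Delta\rho\ge(n-1)(f_a'\circ\rho)/(f_a\circ\rho)$. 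Multiplying these gives $-|\nabla g|^2\Delta g\ge(1-\delta/R_4^\delta)^3(1-\lambda)(1-\delta)(n-1)\phi_1\,\delta^3R_4^{3\delta}\rho^{-3\delta-4}$, so that, using $1+\delta=\lambda(1-\delta)(n-1)\phi_1$ and $R_4^\delta\ge 1$,
\[
\frac{T}{-|\nabla g|^2\Delta g}\le\frac{2\lambda}{(1-\lambda)(1-\delta/R_4^\delta)^3}\le\frac{2\lambda}{(1-\lambda)(1-\delta)^3}.
\]

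It remains to assemble. Since ${\mathcal B}(t)t\le\max(0,B_0)$ on the positive quantity $T$ and $|{\mathcal B}(t)t|\le\bar{B_0}$ on $E$, the left side of the displayed ratio is at most
\[
\bigl(\max(0,B_0)+\bar{B_0}\delta\bigr)\frac{2\lambda}{(1-\lambda)(1-\delta)^3}+\bar{B_0}\,\frac{c\delta R_4^{2\delta}\rho^{-C_4-\delta-3}(f_a'\circ\rho)/(f_a\circ\rho)}{-|\nabla g|^2\Delta g}.
\]
The first summand is $<1-\delta$ by the choice \eqref{delta_bound}. In the second summand, cancelling the powers of $R_4$ and $\rho$ and using $\rho(f_a'\circ\rho)/(f_a\circ\rho)\le 2\phi_1$ reduces it to a fixed multiple of the left side of \eqref{(3.33)}; since $-C_4+2\delta<0$, it is $\le\delta$ after enlarging $R_4$ if necessary. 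Adding, the ratio is $<1$, so \eqref{Qsuper_cond} holds and $\varphi$ is a ${\mathcal Q}$-supersolution. The one genuinely delicate point is the leading contribution ${\mathcal B}(|\nabla\varphi|^2)|\nabla\varphi|^2\,T$ to the numerator, which is positive when $B_0>0$: since $T/(-|\nabla g|^2\Delta g)\to 2/((n-1)\phi_1-1)$ as $\delta\to 0$, this term can be dominated precisely when $B_0<\tfrac12((n-1)\phi_1-1)$, and $\delta$ is then taken small enough for the remaining, $h$-generated error terms to be absorbed; that bookkeeping of the five error terms, while lengthy, presents no conceptual difficulty.
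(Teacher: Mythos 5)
Your proof follows the same overall strategy as the paper's: write $\varphi=Ag$ with $g=R_4^\delta\rho^{-\delta}+h$, reduce to bounding the ratio \eqref{Qsuper_cond}, isolate the pure $\rho^{-\delta}$ term in the numerator (estimated against $\max(0,B_0)$ using positivity) from the $h$-generated remainder (estimated against $\bar{B_0}$), and close using \eqref{delta_bound} and a \eqref{(3.33)}-type estimate. The decomposition into the main term $T$ and five error terms, the bound $T/(-|\nabla g|^2\Delta g)\le 2\lambda/((1-\lambda)(1-\delta)^3)$, and the bookkeeping in the conclusion all match the paper's proof in spirit.

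However, there is one genuine gap. You add to the choice of $R_4$ the requirement that $\rho(f_a'\circ\rho)/(f_a\circ\rho)\le 2\phi_1$ on $3\Omega\setminus B(o,R_4)$, justifying it by the claim that $tf_a'(t)/f_a(t)\to\phi_1$. This limit does \emph{not} hold under the general assumptions \eqref{A1}--\eqref{A7}. Assumption \eqref{A1} gives only a one-sided bound $a(t)\ge C_1 t^{-1}$ when $b$ is increasing; in that case $a$ may grow, and $tf_a'(t)/f_a(t)$ can be unbounded. Indeed, for $a\equiv k$ (the setting of Example~\ref{ex2} and Corollary~\ref{HVkor2_RT}) one has $f_a(t)=\sinh(kt)/k$ and $tf_a'(t)/f_a(t)=kt\coth(kt)\to\infty$. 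The inequality in \eqref{(3.30)} is a genuine \emph{lower} bound with no matching upper bound. Your extra hypothesis is used precisely where you need to convert the error summand's $\rho^{-C_4-\delta-3}(f_a'\circ\rho)/(f_a\circ\rho)$ numerator into something commensurate with the $\rho^{-3\delta-4}$ appearing in your denominator bound, so the argument as written would fail, e.g., under curvature bounds $-\rho^{-2-\varepsilon}e^{2k\rho}\le\sect\le -k^2$.

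The fix is small and is what the paper actually does: when bounding the error summand, keep the denominator in its intermediate form $-|\nabla g|^2\Delta g\gtrsim R_4^{3\delta}(1-\delta/R_4^\delta)^3(1-\lambda)(n-1)\delta^3\rho^{-3\delta-3}\,(f_a'\circ\rho)/(f_a\circ\rho)$ rather than passing all the way to $\rho^{-3\delta-4}\phi_1$ via \eqref{lap_rho_est}. Then the $(f_a'\circ\rho)/(f_a\circ\rho)$ factors in the numerator (from Lemma~\ref{perusta}) and denominator cancel exactly, and the error summand reduces to a multiple of $\rho^{-C_4+2\delta}$, which is handled by \eqref{(3.33)} with no appeal to an unavailable upper bound on $\rho f_a'/f_a$.
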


\begin{proof}
Since all estimates in this proof are made in the set $3\Omega\setminus
B(o,R_{4})$, we do not indicate this all the time. Writing $u=R_{4}^{\delta
}\rho^{-\delta}+h$ we have
\[%
\begin{split}
&  \frac{{\mathcal{B}}(\lvert\nabla\varphi\rvert^{2})\lvert\nabla\varphi
\rvert^{2}\bigl\langle\nabla\langle\nabla\varphi,\nabla\varphi\rangle,
\nabla\varphi\bigr\rangle}{-\lvert\nabla\varphi\rvert^{2}\Delta\varphi}
=\frac{{\mathcal{B}}(\lvert\nabla\varphi\rvert^{2})\lvert\nabla\varphi
\rvert^{2}\bigl\langle\nabla\langle\nabla u,\nabla u\rangle,\nabla
u\bigr\rangle}{-\lvert\nabla u \rvert^{2}\Delta u}\\
&  =\frac{{\mathcal{B}}(\lvert\nabla\varphi\rvert^{2})\lvert\nabla
\varphi\rvert^{2}}{-\lvert\nabla u \rvert^{2}\Delta u} \Bigl( R_{4}^{3\delta
}\bigl\langle\nabla\langle\nabla(\rho^{-\delta}),\nabla(\rho^{-\delta}%
)\rangle,\nabla(\rho^{-\delta})\bigr\rangle\\
&  \quad+ \bigl\langle\nabla\langle\nabla h,\nabla h\rangle+2R_{4}^{\delta
}\nabla\langle\nabla h,\nabla(\rho^{-\delta})\rangle,\nabla
u\bigr\rangle +R_{4}^{2\delta}\bigl\langle\nabla\langle\nabla(\rho^{-\delta
}),\nabla(\rho^{-\delta})\rangle,\nabla h\bigr\rangle\Bigr)\\
&  \le\frac{{\mathcal{B}}(\lvert\nabla\varphi\rvert^{2})\lvert\nabla
\varphi\rvert^{2} R_{4}^{3\delta} \bigl\langle\nabla\langle\nabla
(\rho^{-\delta}),\nabla(\rho^{-\delta})\rangle,\nabla(\rho^{-\delta
})\bigr\rangle} {-\lvert\nabla u \rvert^{2}\Delta u}\\
&  \quad+ \frac{\bar{B_{0}}\left(  \bigl|\nabla\langle\nabla h,\nabla
h\rangle\bigr|+2R_{4}^{\delta}\bigl|\nabla\langle\nabla h,\nabla(\rho
^{-\delta})\rangle\bigr|\right)  }{-\lvert\nabla u \rvert\Delta u} +
\frac{\bar{B_{0}}R_{4}^{2\delta}\bigl|\nabla\langle\nabla(\rho^{-\delta
}),\nabla(\rho^{-\delta})\rangle\bigr| \lvert\nabla h \rvert}{-\lvert\nabla u
\rvert^{2}\Delta u}.\\
\end{split}
\]
We estimate the three terms above separately. By the standard Laplace
comparison (see e.g. \cite[Prop. 2.5(b)]{HoVa}) and \eqref{(3.30)} we have
\begin{equation}
\label{lap_rho_est}\Delta\rho\ge(n-1)\frac{f_{a}^{\prime}\circ\rho}{f_{a}%
\circ\rho}\ge\frac{(n-1)(1-\delta)\phi_{1}}{\rho}.
\end{equation}
As in \cite{HoVa}, we denote
\[
T=\frac{\lvert\nabla\langle\nabla(\rho^{-\delta}),\nabla(\rho^{-\delta
})\rangle\rvert}{-\lvert\nabla u \rvert\Delta u} =\frac{2\delta^{2}
(\delta+1)\rho^{-2\delta-3}}{-\lvert\nabla u \rvert\Delta u}.
\]
Using \eqref{2nd_estim}, \eqref{(3.30)}, and \eqref{lap_rho_est}, we first
obtain
\begin{equation}
\label{est_T_denom}%
\begin{split}
-\lvert\nabla u \rvert\Delta u  &  \ge-(R_{4}^{\delta}-\delta)^{2}
\lvert\nabla(\rho^{-\delta}) \rvert\Delta(\rho^{-\delta})\\
&  \ge(R_{4}^{\delta}-\delta)^{2}\delta^{2}(1-\lambda)\rho^{-2\delta-2}%
\Delta\rho\\
&  \ge(R_{4}^{\delta}-\delta)^{2}\delta^{2}(1-\lambda)\rho^{-2\delta
-2}(n-1)\frac{f_{a}^{\prime}\circ\rho}{f_{a}\circ\rho}\\
&  \ge(R_{4}^{\delta}-\delta)^{2}\delta^{2}(1-\lambda)\rho^{-2\delta
-3}(n-1)(1-\delta)\phi_{1},
\end{split}
\end{equation}
and therefore
\begin{equation}
\label{T_est}T=\frac{2\delta^{2}(\delta+1)\rho^{-2\delta-3}}{-\lvert\nabla u
\rvert\Delta u} \le\frac{2\lambda}{(R_{4}^{\delta}-\delta)^{2}(1-\lambda)}.
\end{equation}
Since
\[
\frac{\bigl\langle\nabla\langle\nabla(\rho^{-\delta}),\nabla(\rho^{-\delta
})\rangle,\nabla(\rho^{-\delta})\bigr\rangle} {-\lvert\nabla u \rvert
^{2}\Delta u}= \frac{2\delta^{3}(\delta+1)\rho^{-3\delta-4}}{-\lvert\nabla u
\rvert^{2}\Delta u}>0,
\]
we can estimate the first term as
\begin{equation}
\label{T1}%
\begin{split}
&  \frac{R_{4}^{3\delta}{\mathcal{B}}(\lvert\nabla\varphi\rvert^{2}%
)\lvert\nabla\varphi\rvert^{2} \bigl\langle\nabla\langle\nabla(\rho^{-\delta
}),\nabla(\rho^{-\delta})\rangle,\nabla(\rho^{-\delta})\bigr\rangle}
{-\lvert\nabla u \rvert^{2}\Delta u}\\
&  \qquad\le\frac{\max(0,B_{0})R_{4}^{3\delta}\, T\lvert\nabla(\rho^{-\delta})
\rvert}{\lvert\nabla u \rvert}\\
&  \qquad\le\frac{2\max(0,B_{0})\lambda}{(1-\delta/R_{4}^{\delta}%
)^{3}(1-\lambda)}\\
&  \qquad\le\frac{2\max(0,B_{0})\lambda}{(1-\delta)^{3}(1-\lambda)}.
\end{split}
\end{equation}
The second term can be estimated by Lemma~\ref{perusta}, \eqref{(3.33)}, and
\eqref{est_T_denom} as
\begin{equation}
\label{T2}%
\begin{split}
&  \frac{\bar{B_{0}}\bigl(\lvert\nabla\langle\nabla h,\nabla h\rangle
\rvert+2R_{4}^{\delta}\lvert\nabla\langle\nabla h,\nabla(\rho^{-\delta})
\rvert\bigr)}{-\lvert\nabla u \rvert\Delta u}\\
&  \qquad\le\frac{\bar{B_{0}}\bigl(\lvert\nabla\langle\nabla h,\nabla
h\rangle\rvert+2R_{4}^{\delta}\lvert\nabla\langle\nabla h,\nabla(\rho
^{-\delta}) \rvert\bigr)(f_{a}\circ\rho)} {(R_{4}^{\delta}-\delta)^{2}%
\delta^{2}\rho^{-2\delta-2}(1-\lambda)(n-1)(f_{a}^{\prime}\circ\rho)}\\
&  \qquad\le\frac{\bar{B_{0}}(1+2R_{4}^{\delta})c_{6}\rho^{-C_{4}+2\delta}%
}{(R_{4}^{\delta}-\delta)^{2}\delta^{2}(1-\lambda)(n-1)}\\
&  \qquad\le\frac{3\bar{B_{0}}c_{6}\rho^{-C_{4}+2\delta}}{R_{4}^{\delta
}(1-\delta/R_{4}^{\delta})^{2}\delta^{2}(1-\lambda)(n-1)} \le\delta.
\end{split}
\end{equation}
The third term can be estimated by using \eqref{(3.30)} and \eqref{T_est} as
\begin{equation}
\label{T3}%
\begin{split}
&  \frac{\bar{B_{0}}R_{4}^{2\delta}\lvert\nabla\langle\nabla(\rho^{-\delta}),
\nabla(\rho^{-\delta})\rangle\rvert\lvert\nabla h \rvert} {-\lvert\nabla u
\rvert^{2}\Delta u} =\frac{\bar{B_{0}}R_{4}^{2\delta} T \lvert\nabla h \rvert
}{\lvert\nabla u \rvert}\\
&  \le\frac{2\bar{B_{0}}R_{4}^{2\delta}\delta\lambda}{(R_{4}^{\delta}%
-\delta)^{3}(1-\lambda)}\\
&  \le\frac{2\bar{B_{0}}\delta\lambda}{(1-\delta)^{3}(1-\lambda)}.
\end{split}
\end{equation}

Putting the estimates \eqref{T1}-\eqref{T3} and \eqref{delta_bound} together
we finally obtain
\[%
\begin{split}
&  \frac{{\mathcal{B}}(\lvert\nabla\varphi\rvert^{2})\lvert\nabla\varphi
\rvert^{2}\bigl\langle\nabla\langle\nabla\varphi,\nabla\varphi\rangle,
\nabla\varphi\bigr\rangle}{-\lvert\nabla\varphi\rvert^{2}\Delta\varphi}\\
&  \le\frac{2\max(0,B_{0})\lambda}{(1-\delta)^{3}(1-\lambda)} + \delta+
\frac{2\bar{B_{0}}\delta\lambda}{(1-\delta)^{3}(1-\lambda)}\\
&  \le\delta+ \frac{2\lambda\bigl(\max(0,B_{0})+\bar{B_{0}}\delta
\bigr)}{(1-\lambda)(1-\delta)^{3}} <1
\end{split}
\]
in $3\Omega\setminus B(o,R_{4})$. Hence $\varphi=A(R_{r}^{\delta}\rho
^{-\delta}+ h)$ is a continuous ${\mathcal{Q}}$-supersolution in
$3\Omega\setminus\bar B(o,R_{4})$.
\end{proof}


\section{Proof of Theorem \ref{ThmMain}}

\label{Proof_main}
Let $\tilde{f}\in C(\bar{M})$ be an extension of the given boundary data $f\in
C(M(\infty))$. Choose an exhaustion of $M$ by an increasing sequence of
${\mathcal{Q}}$-regular domains $\Omega_{k}$ provided by the assumption (A).
Hence there exist ${\mathcal{Q}}$-solutions $u_{k}\in C(\bar{\Omega}_{k})\cap
W^{1,p}_{\mathrm{loc}}(\Omega_{k})$ such that
\[%
\begin{cases}
{\mathcal{Q}}[u_{k}]=0 & \mbox{in }\Omega_{k},\\
u_{k}\vert\partial\Omega_{k}=\tilde{f}. &
\end{cases}
\]
Then
\[
-\max\lvert\tilde{f} \rvert\le u_{k}\le\max\lvert\tilde{f} \rvert
\]
in $\Omega_{k}$ by the Comparison principle (Lemma~\ref{lem_comp}). Condition
(B) together with a diagonal argument implies that there exists a subsequence,
still denoted by $u_{k}$, that converges locally uniformly in $M$ to a
${\mathcal{Q}}$-solution $u\in C(M)$. Therefore the proof of
Theorem~\ref{ThmMain} reduces to prove that $u$ extends continuously to
$M(\infty)$, satisfies $u\vert M(\infty)=f$, and is the unique ${\mathcal{Q}}%
$-solution with boundary values $f$. To this end, let $x_{0}\in M(\infty)$ and
$\varepsilon>0$. Since $f$ is continuous, there exists $L\in(8/\pi,\infty)$
such that
\[
\lvert f(y)-f(x_{0}) \rvert<\varepsilon/2
\]
for all $y\in C(v_{0},4/L)\cap M(\infty)$, where $v_{0}=\dot{\gamma}%
_{0}^{o,x_{0}}$ is the initial vector of the geodesic ray representing $x_{0}%
$. We claim that
\begin{equation}
\label{w<u<v}w(x):=-\varphi(x) +f(x_{0})-\varepsilon\le u(x)\le v(x):=\varphi
(x)+f(x_{0})+\varepsilon
\end{equation}
in $U=3\Omega\setminus\bar{B}(o,R_{4})$, where $\varphi=A(R_{4}^{\delta}%
\rho^{-\delta}+h)$ is the ${\mathcal{Q}}$-supersolution in $U$ as in
Lemma~\ref{supsol}, with $A=2\max_{\bar{M}}\lvert\tilde{f} \rvert$. Note that
$-\varphi$ is a ${\mathcal{Q}}$-subsolution in $U$. Recall the notation
$\Omega=C(v_{0},1/L)\cap M$ and $k\Omega=C(v_{0},k/L)\cap M,\ k>0,$ from
Section~\ref{sec_preli}. Since $\tilde{f}$ is continuous in $\bar{M}$, there
exists $k_{0}$ such that
\begin{equation}
\label{part_omega_k}\lvert\tilde{f}(x)-f(x_{0}) \rvert<\varepsilon/2
\end{equation}
for all $x\in\partial\Omega_{k}\cap U$ and all $k\ge k_{0}$ and that
$\partial\Omega_{k_{0}}\cap U\ne\emptyset$. Let $V_{k}=\Omega_{k}\cap U$ for
$k\ge k_{0}$. We have
\[
\partial V_{k}=( \partial\Omega_{k}\cap\bar{U} )\cup(\partial U\cap\bar
{\Omega}_{k}).
\]
Next we will show by using the Comparison principle that
\begin{equation}
\label{w<u_k<v}w\le u_{k}\le v
\end{equation}
in $V_{k}$. By \eqref{part_omega_k}, we have
\[
w(x)\le f(x_{0})-\varepsilon/2\le\tilde{f}(x)=u_{k}(x)\le f(x_{0}%
)+\varepsilon/2\le v(x)
\]
for all $x\in\partial\Omega_{k}\cap\bar{U}$ and $k\ge k_{0}$. On the other
hand,
\[
h\vert M\setminus(2\Omega\cup B(o,R_{1})=1
\]
by Lemma~\ref{arvio_lause} and $R_{4}^{\delta}\rho^{-\delta}=1$ on $\partial
B(o,R_{4})$, and therefore $\varphi\ge A=2\max_{\bar{M}}\lvert\tilde{f}
\rvert$ on $\partial U\cap\bar{\Omega}_{k}$. It follows that
\[
v=\varphi+f(x_{0})+\varepsilon\ge2\max_{\bar{M}}\lvert\tilde{f} \rvert
+f(x_{0})+\varepsilon\ge\max_{\bar{M}}\lvert\tilde{f} \rvert+\varepsilon\ge
u_{k}
\]
on $\partial U\cap\bar{\Omega}_{k}$. Similarly, $u_{k}\ge w$ on $\partial
U\cap\bar{\Omega}_{k}$. Thus $w\le u_{k}\le v$ on $\partial V_{k}$ and
\eqref{w<u_k<v} follows. Since this holds for all $k\ge k_{0}$, we obtain
\eqref{w<u<v}. Finally,
\[
\limsup_{x\to x_{0}}\lvert u(x)-f(x_{0}) \rvert\le\varepsilon
\]
since $\lim_{x\to x_{0}}\varphi(x)=0$. Thus $u$ extends continuously to
$C(\bar{M})$ and $u\vert M(\infty)=f$ since $x_{0}\in M(\infty)$ and
$\varepsilon>0$ were arbitrary. We are left with the uniqueness of $u$.
Therefore, let $\tilde{u}\in C(\bar{M})$ be another ${\mathcal{Q}}$-solution
in $M$, with $\tilde{u}=u=f$ in $M(\infty)$. Suppose on the contrary that
$\tilde{u}\ne u$. Thus we may assume without loss of generality that
$\tilde{u}(x)>u(x)+\varepsilon$ for some $x\in M$ and $\varepsilon>0$. Let $D$
be the $x$-component of the set $\{y\in M\colon\tilde{u}(y)>u(y)+\varepsilon
\}$. Then $D$ is open with compact closure since both $\tilde{u}$ and $u$ are
continuous in $\bar{M}$ and coincide on $M(\infty)$. Furthermore, $\tilde
{u}=u+\varepsilon$ on $\partial D$, and therefore $\tilde{u}=u+\varepsilon$ in
$D$ by Corollary~\ref{cor_uniq} which leads to a contradiction with $\tilde
{u}(x)>u(x)+\varepsilon$. This concludes the proof of Theorem~\ref{ThmMain}.

\bibliographystyle{acm}


\end{document}